\numberwithin{equation}{section}
\newtheorem{theorem}{Theorem}[section]
\newtheorem{proposition}{Proposition}[section]
\newtheorem{lemma}{Lemma}[section]
\newtheorem{corollary}{Corollary}[section]
\begin{document}


\date{}

\title{On commutative invariants for modules over crossed products of minimax nilpotent linear groups}

\author{ Anatolii V. Tushev 
\thanks{The author has received funding through the MSCA4Ukraine project, which is funded by the European Union (ID 1232926)} \\
        Justus Liebig University Giessen\\
        Giessen 35390, Germany;\\
        {\it E-mail: Anatolii.Tushev@math.uni-giessen.de}\\
       }

\maketitle

\begin{abstract}
Let $N$ be a minimax nilpotent torsion-free normal subgroup of a soluble group $G$ of finite rank, $R$ be a finitely generated commutative domain and $R*N$ be a crossed product of $R$ and $N$. In the paper we construct a correspondence between an $R*N$-module $W$ and a finite set $M$ of equivalent 
classes of prime ideals minimal over $Ann_{kA}(W/WI)$, where $kA$ is a group algebra of an abelian minimax group $A$ and $I$ is an appropriative $G$-invariant ideal of $RG$. It is shown  that if $Wg \cong W$ for all $ g \in g $ then the action of the group $G$ by conjugations on $N$ can be extended to an action of the group $G$ on the set $M$. The results allow us to  apply methods of commutative algebra to the study of $W$.

\end{abstract}

\section{Introduction}

A group $G$ is said to have finite (Prufer) rank if there is a positive integer $m$ such that any finitely generated subgroup of $G$ may be generated by $m$ elements; the smallest $m$ with this property is the rank $r(G)$ of $G$. A group $G$ is said to be of finite torsion-free rank if it has a finite series each of whose factor is either infinite cyclic or locally finite; the number $r_{0} (G)$ of infinite cyclic factors in such a series is the torsion-free rank of $G$. If a group $G$ has a finite series each of whose factor is either cyclic or quasi-cyclic then $G$ is said to be minimax. If in such a series all infinite factors are cyclic then the group $G$ is said to be polycyclic. 
\par
Let $G$ be an abelian group and $ t(G) $ be the torsion subgroup of $ G $. Let $p \in \pi(t(G))$  and $G_p$ be the 
Sylow $ p $-subgroup of $t(G)$, where $\pi(t(G))$ is the set of prime divisors of orders of elements of $ t(G) $. 
Then we can define the total rank ${r_t}(G)$ of $G$ by the following formula: ${r_t}(G) = r(G/t(G)) + \sum\nolimits_{p \in \pi(t(G)) } r(G_p) $. A soluble group has finite abelian total rank, or is a soluble $FATR$-group, if it has a finite series in which each factor is abelian of finite total rank. Many results on the construction of soluble $FATR$-groups can be found in \cite{LeRo04}. 
\par

A ring $R * G$ is called a crossed product of a ring $R$ and a group $G$ if $R \le R * G$ and there is an injective mapping ${\varphi ^*}:g \mapsto \bar g$ of the group $G$ to the group of units $U(R * G)$ of  $R * G$ such that each element $a \in R * G$ can be uniquely presented as a finite sum $a = \sum\nolimits_{g \in G} {{a_g}} \bar g$, where ${a_g} \in R$. The addition of two such sums is defined component-wise. The multiplication is defined by the formulas $\bar g\bar h = t(g,h)\overline {gh} $ and $r\bar g = \bar g({\bar g^{ - 1}}r\bar g)$, where $g,h \in G$,$r \in R$, ${\bar g^{ - 1}}r\bar g \in R$ and $t(g,h)$ is a unit of  $R$ (see \cite{Pass89}). This notion of the crossed product was introduced in  \cite{Bovd1963, Bovd1964}
  \par 
If $a = \sum\nolimits_{g \in G} {{a_g}} \bar g \in R * G$, then the set $Supp(a)$ of  elements $g \in G$ such that ${a_g} \ne 0$ is called the support of the element $a$. Let $H$ be a subgroup of  $G$ then the set of  elements $a \in R * G$ such that $Supp(a) \subseteq H$ forms a crossed product $R * H$ contained in $R * G$. If the  subgroup $H$ is normal and $P$ is a $\bar G$-invariant ideal of the ring $R * H$ then it is not difficult to verify that  the quotient ring ${R * G} / {PR * G}$ is a crossed product $({R * H}/ {P}) * (G /H)$ of the quotient ring ${{R * H} /P}$ and the quotient group $G / H$. In particular, if $RG$ is a group ring, $D$ is a normal subgroup of the group  $G$ and $P$ is a $G$-invariant ideal of the group ring $RD$ then the quotient ring ${{RG} / {PRG}}$ is a crossed  product $({RD} /P) * (G / D)$ of the quotient ring ${RD} / P$ and the quotient group $G / D$. 
\par
The last example  shows the main way in which crossed products arise in studying of group rings and modules over them. If $W$ is an $RG$-module that is annihilated by the ideal $P$ then $W$ can be considered as an ${{RG} / {PRG}}$-module, where ${{RG} / {PRG}}$ is a crossed  product $({RD} /P) * (G / H)$ of the quotient ring ${RD} / P$ and the quotient group $G / H$. This situation arises quite often when studying the structure of $RG$-modules, which gives rise to the need to study modules over such crossed products. 
\par
It should be noted that at present there are few methods suitable for studying modules over group rings of soluble groups outside the class of polycyclic groups. One such method is based on the following techniques introduced by Brookes in  \cite{Broo88} for the case of polycyclic groups. Let $N$ be a group and let $K$ be a normal subgroup of $N$ such that the quotient group $N/K$ is torsion-free minimax abelian. Let $R$ be a ring and let $W$ be a finitely generated $RG$-module. Let $I$ be an $N$-invariant ideal of $RK$ such that $\left|H/I^{\dag } \right|<\infty$ and $k=R/(R\bigcap I)$ is a field, where $ I^{\dag }=G \cap(I+1)$. Then the derived subgroup of the quotient group $H/I^{\dag} $ is finite and hence, by \cite[Lemma 2.1(i)]{Tush2022-1}, $H/I^{\dag} $ has a central subgroup  $A$ of finite index. So, the quotient module $(W)_I$ may be considered as a finitely generated $kA$-module. Let $\mu$ be the set of prime ideals of $kA$ minimal over $ann_{kA}(W/WI)$. Below, we use the notation  $(W)_I = W/WI$ . The results of works \cite{Tush2000,Tush2002,Tush2022,Tush2022-1} show that the property of the set $\mu$ can be used in studying the properties of the module $W$ and this gives good results. The main result of the paper, Theorem 3.1 shows that the approach described above can be extended to the case of modules over crossed products.

 \section{ Grand culling ideals and modules over crossed products of torsion-free minimax nilpotent groups}

Let $H$ be a subgroup of a group $G$, the subgroup $H$ is said to be dense in $G$ if for any $g \in G$ there is an integer $n \in \mathbb{N}$ such that ${g^n} \in H$. If ${g^n} \in G\backslash H$ for any $n \in \mathbb{N}$ and any $g \in G\backslash H$ then the subgroup $H$ is said to be isolated in $G$. If the group $G$ is locally nilpotent then the isolator $i{s_G}(H) = \{ g \in G|{g^n} \in H$ for some $n \in \mathbb{N}\} $ of $H$ in $G$ is a subgroup of $G$ and if $H$ is a normal subgroup then so is $i{s_G}(H)$. 
If $G$ is a group then the spectrum $Sp(G)$ of $G$ is the set of prime integers $p$ such that the group $G$ has an infinite $p$-section. 


\begin{proposition}\label{P2.1}
Let $L$ a nilpotent $FART$-group and let $K$ be a normal subgroup of $L$ such that the quotient group $L/K$ is polycyclic. Let $R$ be a finitely generated commutative domain such that $char\, R\notin Sp(L)$ and let $M$ be a faithful $RL$-module. Suppose that the subgroup $K$ contains an isolated abelian $L$-invariant subgroup $D$ such that  $P=Ann_{RD} (M)$ is a prime $L$-invariant faithful ideal of $RD$ such that $P\bigcap R=0$. If the module $M$ is ${RK/ PRK} $-torsion-free then for any nonzero element $0\ne a\in M$ there is a finitely generated subgroup $H\le L$ such that $aRL=aRH\otimes _{RH} RL$.
\end{proposition}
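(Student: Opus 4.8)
The plan is to separate the ``commutative core'' of the problem from the ``group part'': first I would show that the prime ideal $P$ is controlled by a finitely generated subgroup of $D$, and then I would propagate this control over all of $L$ by localising at the large nilpotent subgroup $K$ and exploiting the Noetherianity of crossed products of polycyclic groups. As a preliminary reduction I would replace $M$ by $aRL$. Since $RD/P$ embeds in $RK/PRK$ and $M$ is $RK/PRK$-torsion-free, $M$ is torsion-free over the commutative domain $S:=RD/P$; as $a\ne 0$ this forces $Ann_{RD}(a)=P$, so $aRD\cong S$ and also $Ann_{RD}(aRL)=P$. Hence the crucial data --- the prime ideal $P$ with its primeness, faithfulness and the condition $P\cap R=0$, the isolated abelian $L$-invariant subgroup $D$, and the torsion-freeness over $RK/PRK$ --- are inherited by $aRL$, and we may assume $M=aRL$.

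The first main step is to find a finitely generated subgroup $D_{0}\le D$ such that $P=(P\cap RD_{0})RD$, equivalently $aRD=aRD_{0}\otimes_{RD_{0}}RD$. This is where the hypotheses that $P$ is faithful, that $P\cap R=0$, and that $char\,R\notin Sp(L)$ (and hence $char\,R\notin Sp(D)$) are genuinely used: one has to prove that the controller of a faithful prime ideal of $RD$ meeting $R$ trivially is finitely generated, which rests on the structure theory of prime ideals in group rings of abelian groups of finite rank developed in \cite{Tush2000,Tush2002,Tush2022,Tush2022-1}. Enlarging $D_{0}$ if necessary, I would arrange $r_{0}(D_{0})=r_{0}(D)$.

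To globalise, observe that since $D$ is isolated in $K$ the group $K/D$ is torsion-free nilpotent of finite rank, so $RK/PRK=S*(K/D)$ is the directed union of the subrings $A_{\beta}:=RK_{\beta}/(PRK\cap RK_{\beta})$, where $K_{\beta}$ runs over the finitely generated subgroups of $K$ containing $D_{0}$; each $A_{\beta}$ is a Noetherian domain (being finitely generated as a ring and embedding in the domain $S*(K/D)$), so $RK/PRK$ is an Ore domain with ring of fractions $Q=\bigcup_{\beta}Q_{\beta}$, $Q_{\beta}$ the ring of fractions of $A_{\beta}$. By torsion-freeness the canonical map $M\to\widehat M:=M\otimes_{RK/PRK}Q$ is injective, and $\widehat M$ is a cyclic module over the crossed product $Q*(L/K)$; since $L/K$ is polycyclic this crossed product is Noetherian, so $\widehat M$ is finitely presented, and a finite presentation involves only finitely many elements of $Q$. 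Hence $\widehat M$ is obtained by extension of scalars from a cyclic $Q_{\alpha}*(L/K)$-module for some $\alpha$. Now set $H=\langle K_{\alpha},g_{1},\dots,g_{m}\rangle$, where $g_{1},\dots,g_{m}\in L$ project onto generators of the polycyclic group $L/K$; then $H$ is finitely generated, $HK=L$, and $H\cap K$ is finitely generated (a finitely generated nilpotent group is polycyclic). Choosing a transversal $T\subseteq K$ of $H$ in $L$, one is reduced to showing that the sum $\sum_{t\in T}aRH\,\bar t$ is direct in $M$, and this follows by combining the control of $Ann_{RD}(a)$ by $D_{0}\le H$ with the fact that $Ann_{Q*(L/K)}(a)$ is extended from $Q_{\alpha}*(L/K)$ with $K_{\alpha}\le H\cap K$; clearing denominators by means of the Ore condition in $RK/PRK$ and the $RK/PRK$-torsion-freeness of $M$ then gives $Ann_{RL}(a)=(Ann_{RH}(a))RL$, which is exactly the assertion $aRL=aRH\otimes_{RH}RL$.

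The step I expect to be the main obstacle is the first one --- the finite generation of the controller of the faithful prime $P$ --- since this is where the finite-rank and spectrum hypotheses really do their work and where the commutative algebra is most delicate. The globalisation step, once that is available, is essentially localisation at an Ore domain combined with the Noetherianity of crossed products of polycyclic groups, together with routine denominator bookkeeping.
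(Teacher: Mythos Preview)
Your plan uses the same two ingredients the paper does --- finite control of the faithful prime $P$ by a finitely generated subgroup of $D$ (the paper quotes \cite[Theorem 3.6(ii)]{Tush2025} for this), and Noetherianity of a suitable localisation over the polycyclic quotient --- so the strategy is on target. The difficulty is not where you placed it, however. The passage you call ``routine denominator bookkeeping'' hides a genuine gap.

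After you invert all of $RK/PRK$ to form $Q*(L/K)$, every element of $K$ becomes a unit of $Q$. Your transversal $T$ of $H$ in $L$ lies inside $K$ (since $HK=L$), so in $\widehat M$ the elements $\bar t$ are just units of $Q$; the decomposition $\widehat M\cong M_\alpha\otimes_{Q_\alpha}Q$ does not separate them, because a $Q_\alpha$-basis of $Q$ has nothing to do with the cosets $K/(H\cap K)$. Concretely, from $\sum_{t}a h_t\bar t=0$ in $\widehat M$ with $h_t\in RH$ you cannot conclude $ah_t=0$: the relation collapses to a single equation over $Q*(L/K)$. Equivalently, knowing that $Ann_{Q*(L/K)}(a)$ is extended from $Q_\alpha*(L/K)$ says nothing about how $Ann_{RL/PRL}(a)$ sits relative to the subgroup $H\cap K\le K$, since that subgroup has been inverted away. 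So ``clearing denominators'' does not yield $Ann_{RL}(a)=(Ann_{RH}(a))RL$.

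The paper avoids this by a two–stage reduction rather than a single localisation. First it inverts $\tilde R*\tilde K$ and uses the Noetherianity of $\tilde R*\tilde L(\tilde R*\tilde K)^{-1}$ together with \cite[Theorem 4.5]{Tush2025} to pass to a finitely generated dense $\tilde X\le\tilde L=L/D$; this only shrinks the group above $D$, so one may now assume $L/D$ is finitely generated. In the second stage it localises only at the much smaller commutative ring $\tilde R_H=RD_H/P_H$ (where $D_H=D\cap H$ and $P=P_H\,RD$). The point is that in the crossed product $(\tilde R_H(\tilde R_H)^{-1})*(\tilde D\times\tilde H)$ the remaining transversal sits in the group factor $\tilde D=D/D_H$, which has \emph{not} been inverted; directness then follows from \cite[Lemma 4.4]{Tush2025}. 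Your one-shot localisation at $RK/PRK$ erases exactly the group structure you need in order to see that the sum over $T$ is direct; splitting the argument into these two stages is what makes it work.
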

\begin{proof} Let $\tilde{R}={RD/P }  $, $\tilde{L}={L/ D} $ and $\bar{K}={K/D} $. Then ${RL/PRL} =\tilde{R}*\tilde{L}$ is a crossed product of the commutative domain $\tilde{R}$ and the nilpotent group $\tilde{L}$ and ${RK/PRK} =\tilde{R}*\tilde{K}$ is a subring of $\tilde{R}*\tilde{L}$. Since $MP=0$, we can consider $M$ as an $\tilde{R}*\tilde{L}$-module. By \cite[Lemma 4.3]{Tush2025}, there exists a Noetherian partial right ring of quotients $\tilde{R}*\tilde{L}(\tilde{R}*\tilde{K})^{-1}$ and , by \cite[Proposition 4.1(iii)]{Tush2025}, there exists a cyclic $\tilde{R}*\tilde{L}(\tilde{R}*\tilde{K})^{-1} $-module $W=a\tilde{R}*\tilde{L}(\tilde{R}*\tilde{K})^{-1} $ such that $a\tilde{R}*\tilde{L}\leq W$ and $W=\left\{ms^{-1}\, |\, m\in a\tilde{R}*\tilde{L},\, s\in \tilde{R}*\tilde{K}\right\}$. Then the arguments on the proof of \cite[Theorem 4.5]{Tush2025} shows that there is a finitely generated dense subgroup $\tilde{X}$ of $\tilde{R}*\tilde{L}$ such that $a\tilde{R}*\tilde{L}=a\tilde{R}*\tilde{X}\otimes _{\tilde{R}*\tilde{X}} \tilde{R}*\tilde{L}=\oplus _{t\in T} a\tilde{R}*\tilde{X}t$, where $T_{\tilde{X}} $ is a right transversal for $\tilde{X}$ in $\tilde{L}$. Therefore, $aRL=aRX\otimes _{RX} RL=\oplus _{t\in T} aRXt$, where $X$ is the preimage of $\tilde{X}$ in $L$ and $T$ is a right transversal for $X$ in $L$. It follows from \cite[Chap. 2, Lemma 2.1]{Karp} that we can replace $X$ with $L$ and hence we can assume that the quotient group $\tilde{L}={L/ D} $ is finitely generated. Then there is a finitely generated dense subgroup $H$ of $L$ such that $L=DH$. Let $D_{H} =D\bigcap H$ then it is not difficult to show that $D_{H} $ is a normal subgroup of $L$. By \cite[Theorem 3.6(ii)]{Tush2025}, there is a finitely generated subgroup $E$ of $D$ such that $P=(P\bigcap RE)RD$. Evidently, we can choose the subgroup $H$ such that $E\le D_{H}$ and hence $P=P_{H} RD$, where $P_{H} =P\bigcap RD_{H}$. It easily implies that $PRL=P_{H}RL$ and, as the ideal $P$ and the subgroup $D_{H}$ are $L$-invariant, we can conclude that $P_{H}$ is a $L$-invariant ideal of $RD_{H} $. Since $PRL=P_{H} RL$, we have $RL/PRL=RL/P_{H} RL$ and hence $\tilde{R}*\tilde{L}=\tilde{R}_{H} *(\tilde{D}\times \tilde{H})$, where the quotient group $\tilde{D}=D/D_{H} $ is torsion. By \cite[Proposition 4.1(i)]{Tush2025}, there exists a partial right ring of quotients $\tilde{R}*\tilde{L}(\tilde{R}_{H} )^{-1} =\tilde{R}_{H} *(\tilde{D}\times \tilde{H})(\tilde{R}_{H} )^{-1}$, where $\tilde{H}=H/D_{H} $ and $\tilde{D}=D/D_{H} $. Then, as the ideal $P_{H} $ and the subgroup $D_{H} $ are $L$-invariant, we see that $(\tilde{R}(\tilde{R}_{H} )^{-1} )*\tilde{L}=(\tilde{R}_{H} (\tilde{R}_{H} )^{-1} )*(\tilde{D}\times \tilde{H})$. Since the quotient group $\tilde{D}=D/D_{H} $ is torsion, the domain $\tilde{R}(\tilde{R}_{H} )^{-1} $ is algebraic over the subfield $\tilde{R}_{H} (\tilde{R}_{H} )^{-1} $ and hence $\tilde{R}(\tilde{R}_{H} )^{-1} $ is a field. Then it follows from \cite[Proposition 1.6]{Pass89} that the ring $Q=(\tilde{R}(\tilde{R}_{H} )^{-1} )*\tilde{L}=(\tilde{R}_{H} (\tilde{R}_{H} )^{-1} )*(\tilde{D}\times \tilde{H})$ is Noetherian. 
\par
Evidently, $aQ\le W$. Let $I=ann_{Q} (a)$, as the ring $Q$ is Noetherian, the ideal $I$ is finitely generated. It easily implies that we can choose the finitely generated dense subgroup $H\le L$ such that all generators of $I$ are contained in $(\tilde{R}_{H} (\tilde{R}_{H} )^{-1} )*\tilde{H}$ and hence $I=(I\bigcap (\tilde{R}_{H} (\tilde{R}_{H} )^{-1} )*\tilde{H})(\tilde{R}_{H} (\tilde{R}_{H} )^{-1} )*(\tilde{D}\times \tilde{H})$. Then it follows from \cite[Lemma 4.4]{Tush2025} that $a(\tilde{R}_{H} (\tilde{R}_{H} )^{-1} )*(\tilde{D}\times \tilde{H})=\oplus _{t\in \tilde{T}} (a(\tilde{R}_{H} (\tilde{R}_{H} )^{-1} )*\tilde{H}))t$, where $\tilde{T}$ is a right transversal to $H/D_{H} $ in $L/D_{H} $. Therefore, as $aRH\le a(\tilde{R}_{H} (\tilde{R}_{H} )^{-1} )*\tilde{H}$, we can conclude that $aRL=\oplus _{t\in T} (aRH)t$, where $T$ is a right transversal to $H$ in $L$ and hence $aRL=aRH\otimes _{RH} RL$.
\end{proof}

Let $K$ be a normal subgroup of a group $L$ such that $K\le L$ and the quotient group ${ L/K}$ is free abelian with free generators ${ Kx}_{{ 1}} { , Kx}_{{ 2}} { , ..., Kx}_{{ n}} $. We say that $\chi { =\{ <K,\{ x}_{{ j}} { \} }_{{ j}\in { J}} { >|J}\subseteq { \{ 1,..., n\} \} }$ is a full system of subgroups of $L$ over K.
 \par  
Let $R$ be a domain, V be an ${ RK}$-module and let $W$ be an image of ${ V}\otimes _{RK} { RL}$ under an ${ RL}$-module homomorphism $\alpha $. Put $\chi { (W) = \{ X}\in \chi |{ ker }\alpha \cap { (V}\otimes _{RK} { RX) = 0\} }$  and let ${ M}\chi { (W) }$ be the set of maximal elements of $\chi { (W) }$. 

 \begin{lemma}\label{L2.1}
Let $N$ be a minimax nilpotent torsion-free group which has a finite series $D\le K\le L$ of normal subgroups such that the subgroup $D$ and the quotient group $N/K$ are torsion-free abelian, the subgroup $L$ is dense and the subgroup $D$ is isolated in $N$. Suppose that the quotient group $L/K$ is free abelian and let $\chi $ be a full system of subgroups of $L$ over K. Let $R$ be a commutative domain, $0\ne W$ be an $RN$-module, $P$ be an $N$-invariant faithful prime ideal of $RD$ which annihilates $W$ and such that the module $W$ is $RK/PRK$-torsion-free. Then there is an $RN$-submodule $0\ne V\le W$ such that for any element ${ 0 }\ne {  a }\in {  V}$:
\begin{description}
\item[(i)] the module $V$ is $RX/PRX$-torsion-free for any $X\in \chi (aRL)$ and $RX/PRX$-torsion for any $X\in \chi \backslash \chi (aRL)$;
\item[(ii)] $\chi (aRL)=\chi (bRL)$ for any element ${ 0 }\ne {  b }\in {  V}$;
\item[(iii)] in the case where $N=L$, for any subgroup ${ X}\in { M}\chi { (aRL)}$ we have ${ bRL}\cap { aR(X}\bigcap { H)}\ne { 0}$ for any element ${ 0 }\ne {  b }\in {  aRL}$ and any finitely generated dense subgroup ${ H }\le {  L}$. In particular, ${ bRL}\cap { aRH}\ne { 0}$, ${ bRL}\cap { aRX}\ne { 0}$ and the module ${ aRL }$is uniform. 
 \end{description}
 \end{lemma}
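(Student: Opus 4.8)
The plan is to transfer the problem, by Proposition~\ref{P2.1}, to a cyclic module over a polycyclic group, apply Brookes's ``grand culling'' machinery there, and induce the conclusions back up; the minimax and nilpotency hypotheses enter only through Proposition~\ref{P2.1} and the rings-of-quotients results of \cite{Tush2025}. First I would set $\bar R=RD/P$, a commutative domain (as $P$ is prime), and for $D\le X\le N$ identify $RX/PRX$ with the crossed product $\bar R*(X/D)$ of $\bar R$ by the corresponding section of the torsion-free nilpotent minimax group $N/D$; since $PW=0$, $W$ is an $\bar R*(N/D)$-module. Each $\bar R*(X/D)$ is a domain (torsion-free nilpotent groups are orderable) and, by \cite{Tush2025}, a right Ore domain with division ring of fractions $E_X$; for $K\le X\le L$ put $Q_X=E_X*(L/X)$, the Noetherian partial ring of quotients of $RL/PRL$ obtained by inverting the nonzero elements of $RX/PRX$. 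The dictionary I would record at once is that, for $0\ne a\in W$ and $X\in\chi$, membership $X\in\chi(aRL)$ is equivalent to $aRX$ being $RX/PRX$-torsion-free, and to $aRX\cong RX/PRX$: indeed $aRK$ is a cyclic $RK/PRK$-torsion-free module, hence $\cong RK/PRK$, so if $X\in\chi(aRL)$ then $aRX=\bigoplus_t(aRK)t\cong RK/PRK\otimes_{RK}RX=RX/PRX$, torsion-free over itself; conversely a cyclic torsion-free module over a domain $S$ must be $\cong S$ (any nonzero element of the annihilator would already annihilate $1+\mathrm{ann}$), and $RX/PRX$ is free over $RK/PRK$, which forces $\alpha$ injective on $aRK\otimes_{RK}RX$.

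Next, the construction of $V$. The naive device of repeatedly passing to $RX/PRX$-torsion submodules fails, since these need not be proper (for instance when $RX/PRX$ is a field), so instead I would fix $0\ne a_0\in W$ and use Proposition~\ref{P2.1} (applicable since $a_0RK$ is $RK/PRK$-torsion-free) to obtain a finitely generated dense subgroup $H\le L$ with $a_0RL=a_0RH\otimes_{RH}RL$. Enlarging $H$ as in the proof of Proposition~\ref{P2.1}, and using \cite[Chap.~2, Lemma~2.1]{Karp}, I may assume in addition that $D\cap H$ and $K\cap H$ are normal in $L$, that $P=(P\cap R(D\cap H))RD$, and that $\{X\cap H:X\in\chi\}$ is a full system of subgroups of the polycyclic group $H$ over $K\cap H$, with $H/(K\cap H)\cong HK/K$ free abelian. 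Now $a_0RH$ is a finitely generated module over the Noetherian ring $RH$, so the polycyclic grand culling theory of \cite{Broo88} yields a nonzero $RH$-submodule $V_H\le a_0RH$ that is uniform and homogeneous with respect to $\{X\cap H\}$; set $V:=V_HRL=V_H\otimes_{RH}RL=\bigoplus_{t\in T}V_Ht$ for a transversal $T$ of $H$ in $L$, a nonzero $RN$-submodule of $W$. For $0\ne a\in V$, writing $a=\sum_{t\in T}v_tt$ with $v_t\in V_H$, I would push the direct-sum decompositions of the $vR(X\cap H)$ through the decomposition $V=\bigoplus_{t\in T}V_Ht$ to get $aRX=\bigoplus_s(aR(X\cap H))s$ exactly when $V_H$ is torsion-free over the relevant quotient of $R(X\cap H)$, a condition independent of $a$; with the dictionary above this gives (ii) with $\chi(aRL)=\chi^\ast:=\{X\in\chi:V\text{ is }RX/PRX\text{-torsion-free}\}$, and (i) is then the definition of $\chi^\ast$ together with the torsion alternative inherited from \cite{Broo88}.

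For (iii), assume $N=L$ and fix $X\in M\chi(aRL)=M\chi^\ast$; by (i), $aRX\cong RX/PRX$ is uniform, and by maximality of $X$ every group $X_j=\langle X,x_j\rangle$ with $x_j\notin X$ lies outside $\chi^\ast$, so $aRX_j$ is torsion over the one-variable twisted Laurent extension $RX_j/PRX_j$ of $RX/PRX$, whence $aRX_j$ localized at the nonzero elements of $RX/PRX$ is finite-dimensional over $E_X$ (a torsion cyclic module over a twisted Laurent extension of a division ring). Iterating over the finitely many such $j$ shows $aQ_X$ is finite-dimensional over $E_X$, and since $aRL\le V$ is $RX/PRX$-torsion-free, $aRL$ embeds into $aQ_X$. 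Given $0\ne b\in aRL$ and a finitely generated dense $H\le L$ (which, after the enlargement above, we may assume satisfies $aRL=aRH\otimes_{RH}RL$), I would use the torsion relations in the $x_j$-directions furnished by the $aRX_j$ to clear denominators along $L/X$ and land a nonzero $RL$-multiple $b'$ of $b$ inside $aRX$, then, using that $X\cap H$ is dense in $X$ and $aRX\cong RX/PRX$, a further nonzero multiple inside $aR(X\cap H)$; this gives $bRL\cap aR(X\cap H)\ne0$. The stated particular cases follow from $X\cap H\le H$ and $X\cap H\le X$, and uniformity of $aRL$ follows since every nonzero $RL$-submodule of $aRL$ then meets $aRX$, which is uniform.

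The hard part will be this construction and the transfer: one must choose $H$ large enough that \emph{every} $X\in\chi$ is faithfully tracked by $X\cap H$, apply \cite{Broo88} to $a_0RH$, and then verify that uniformity and the torsion/torsion-free dichotomy genuinely survive both the induction $-\otimes_{RH}RL$ and the passage to the localizations $Q_X$ — in particular that clearing denominators along $L/X$ really places a nonzero multiple of $b$ into $aR(X\cap H)$. Once these points are secured, the remaining arguments are formal, given \cite{Tush2025} and \cite{Broo88}.
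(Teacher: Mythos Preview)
Your overall strategy---reduce to a polycyclic piece via Proposition~\ref{P2.1}, quote \cite{Broo88}, and induce back---is not how the paper proceeds, and it creates two genuine gaps.

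First, a hypothesis mismatch: Proposition~\ref{P2.1} requires $R$ to be a \emph{finitely generated} commutative domain with $\operatorname{char} R\notin Sp(L)$, whereas Lemma~\ref{L2.1} only assumes $R$ is a commutative domain. Your invocation of Proposition~\ref{P2.1} is therefore not justified under the stated hypotheses. The paper avoids this entirely: its proof of (i) is the very ``naive device'' you dismissed. For each $X\in\chi$, either the current module is $RX/PRX$-torsion-free (done for this $X$), or some $0\ne w$ has nonzero annihilator in $\tilde R*\tilde X$; since $X\trianglelefteq N$, \cite[Lemma 2.5]{Tush2022-1} makes the cyclic $RN$-submodule $wRN$ entirely $\tilde R*\tilde X$-torsion, and one replaces the module by $wRN$. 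Previously established torsion or torsion-free properties are inherited by submodules, so after finitely many steps (one per $X\in\chi$) one has the desired $V$. Your objection that the torsion part ``need not be proper'' is beside the point: one does not pass to the torsion submodule, one passes to the $RN$-span of a single torsion element.

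Second, even granting Proposition~\ref{P2.1}, your $V:=V_HRL$ is only an $RL$-submodule of $W$, not an $RN$-submodule; but the lemma demands an $RN$-submodule (and parts (i)--(ii) are stated for general $N$ with $L$ merely dense). Taking instead $V_HRN$ destroys the tensor decomposition $V=\bigoplus_t V_Ht$ on which your transfer argument relies.

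For (iii) your localization idea is close to the paper's, but the paper is sharper: after (i), for $X\in M\chi(wRL)$ the module $w\tilde R*\tilde L(\tilde R*\tilde X)^{-1}$ is finite-dimensional over the division ring $E_X$, so one picks $0\ne v$ with $v\tilde R*\tilde L(\tilde R*\tilde X)^{-1}$ \emph{simple}. Then for any $0\ne a,b\in vRL$ the localized modules coincide, forcing $bRL\cap aRX\ne 0$ immediately; density of $X\cap H$ in $X$ and \cite[Lemma 1(iii)]{Tush2002} give $bRL\cap aR(X\cap H)\ne 0$. This replaces your ``clear denominators along $L/X$'' step, which as written is only a sketch. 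One then iterates over the finitely many $X\in M\chi(vRL)$, shrinking $V$ further at each stage.
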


\begin{proof} It is easy to note that in the proof the module $W$ may be replaced by any its proper submodule. Since $P$ annihilates $W$, we can consider $W$ as $RN/PRN$ module. In its tern the quotient ring $RN/PRN$ may be considered as a crossed product $\tilde{R}*\tilde{N}$, where $\tilde{R}=RD/P$ and $\tilde{N}=N/D$. So, we can consider $W$ as an $\tilde{R}*\tilde{N}$-module. 
 \par  

(i) Evidently for any $X\in \chi $ the module $W$ is $RX/PRX$-torsion-free  ($RX/PRX$-torsion) if and only if $W$ is $\tilde{R}*\tilde{X}$-torsion-free ($\tilde{R}*\tilde{X}$-torsion), where $\tilde{X}=X/D$.

Let $X\in \chi $ if the module $W$ is $RX/PRX$-torsion-free then $X\in \chi (aRL)$ for any element ${ 0 }\ne {  a }\in {  W}$. If $Ann_{R*X} w\ne 0$ for some $0\ne w\in W$ then we may change   $W$ by $wR*N$. As $X$ is a normal subgroup of ${ N}$, it follows from \cite[Lemma 2.5]{Tush2022-1} that the module $V=wR*N$ is $\tilde{R}*\tilde{X}$-torsion and hence $X\in \chi \backslash \chi (a{ R}L)$ for any element ${0}\ne {a}\in {V}$. Taking $Y\in \chi \backslash \{ X\} $ and repeating the above arguments we obtain a ${ R}N$-submodule $0\ne V_{1} \le V$ which is either $RY$-torsion or ${ R}Y$-torsion-free. Continuing this process, we see that it is terminated because the set $\chi $ is finite.

 \par  

 (ii) The assertion follows from (i).

 \par  

(iii) Let $\tilde{L}=L/D$, by (i), we can assume that for any ${ 0}\ne w\in W$ and ${ X}\in { M}\chi { (w}\tilde{{ R}}{ *}\tilde{{ L}}{ )}$ the ${ R*N}$-module ${ wR*N}$ is $\tilde{R}*\tilde{X}$-torsion-free and hence, by  \cite[Lemma 2 (i)]{Tush2002}, there exists an $\tilde{R}*\tilde{L}{ (}\tilde{R}*\tilde{X}{ )}^{{ -1}} $-module ${ w}\tilde{R}*\tilde{L}{ (}\tilde{R}*\tilde{X}{ )}^{{ -1}} $. Then it easily follows from the maximality of X that ${ w}\tilde{R}*\tilde{L}{ (}\tilde{R}*\tilde{X}{ )}^{{ -1}} $ has finite dimension over the division ring $\tilde{R}*\tilde{X}{ (}\tilde{R}*\tilde{X}{ )}^{{ -1}} $. Therefore, we can choose the element ${ 0}\ne { v}\in wRL$ such that ${ V=v}\tilde{R}*\tilde{L}{ (}\tilde{R}*\tilde{X}{ )}^{{ -1}} $ is a simple $\tilde{R}*\tilde{L}{ (}\tilde{R}*\tilde{X}{ )}^{{ -1}} $-module. Put $V_{1} =vRL=v\tilde{R}*\tilde{L}$. Then for any $0\ne a,b\in V_{1} $ we have ${ V}=a\tilde{R}*\tilde{L}{ (}\tilde{R}*\tilde{X}{ )}^{{ -1}} ={ b}\tilde{R}*\tilde{L}{ (}\tilde{R}*\tilde{X}{ )}^{{ -1}} $ and hence ${ a}\in { b}\tilde{R}*\tilde{L}{ (}\tilde{R}*\tilde{X}{ )}^{{ -1}} $, it easily implies that ${ b}\tilde{{ R}}{ *}\tilde{{ L}}\cap { a}\tilde{{ R}}{ *}\tilde{{ X}}\ne { 0}$. Since ${ b}\tilde{{ R}}{ *}\tilde{{ L}}={ bRL}$ and ${ a}\tilde{{ R}}{ *}\tilde{{ X}}={ aRX}$, we can conclude that ${ bRL}\cap { aRX}\ne { 0}$. 

Since ${ H}\cap { X}$ is a dense subgroup of X, it follows from  \cite[Lemma 1(iii)]{Tush2002} that ${ bRL}\cap { aR(H}\cap { X)}\ne { 0}$. The last equations also shows that ${ bRL}\cap { aRH}\ne { 0}$ and the module $V_{1} =vRL$ (and hence any its proper submodule) is uniform. Thus, we obtained a submodule $0\ne V_{1} =vRL$ such that the assertion (iii) holds for the chosen subgroup ${ X}\in { M}\chi { (vRL)}$ and elements of $V_{1} $. 

 \par  

Suppose that there is a subgroup $Y\in { M}\chi { (vRL)\backslash \{ X\} }$. Applying the above arguments to $Y$ and the submodule $V_{1} $ we may obtain a submodule $0\ne V_{2} \le V_{1} $ such that the assertion (iii) holds for the chosen subgroups ${ X,Y}\in { M}\chi { (vRL)}$ and elements of $V_{2} $. Continuing this process we see that it is terminated because the set $\chi $ is finite. 
 \end{proof}

Let $R$ be a ring, $G$ be a group and let $J$ be a right ideal of the group ring $RG$. The ideal $J$ is said to be faithful if  $J^{\dag } =G\bigcap (1+J)=1$. 
\par  
Let $K$ be a normal subgroup of $G$, $I$ be a $G$-invariant ideal of the group ring $RK$ then $I^{\dag } =K\bigcap (I+1) $ is a $G$-invariant subgroup of $K$. We say that the ideal $I$ is $G$-grand if $R / (R\bigcap I ) =k$ is a field, $\left|K/ I^{\dag }  \right|<\infty $ and $I=(RF\bigcap I )RK$, where $F$ is a $G$-invariant subgroup of $K$ such that $I^{\dag } \le F$ and the quotient group ${F/I^{\dag } } $ is abelian. If $N$ is a subgroup of $G$ such that $K\le N$ and ${F/ I^{\dag } } $ is a central section of $N$ then we say that the ideal $I$ is $N$-central (see \cite{Tush2023,Tush2024}). 
\par
Let $K$ be a normal subgroup of a  group $N$  such that the quotient group $N/K$ is torsion-free abelian of finite rank. Let $I^{\dag } $ be an $N$-invariant subgroup of $K$ such that the quotient group $K/I^{\dag } $ is finite. As the quotient group $K/I^{\dag } $ is finite and the quotient group $N/K$ is torsion-free abelian of finite rank, it follows from \cite[Lemma 2.2(i)]{Tush2022-1} that $N/I^{\dag } $ has a torsion-free abelian characteristic central subgroup $A$ of finite index. For any subgroup $X$ of $N$ such that $K\le X$ we denote  $A_{X} =A\bigcap X/I^{\dag } $.  

 \begin{lemma}\label{L2.2}

Let $N$ be a minimax nilpotent torsion-free group which has a finite series $D\le K\le L$ of normal subgroups such that the subgroup $D$ and the quotient group $N/K$ are torsion free abelian, the subgroup $L$ is dense, the subgroup $D$ is isolated in $N$ and  the quotient group $L/K$ is free abelian. Let $R$ be a finitely generated commutative domain and $P$ be an $N$-invariant  faithful  prime ideal of $RD$ such that $char\, (RD/P)\notin Sp(N)$. Let  $0\ne W$ be an $RN$-module  which is annihilated by the ideal $P$ and which is $RK/PRK$-torsion-free. Let $I$ be an $N$-grand ideal of $RK$ such that $P\le I$, $A\le L/I^{\dag }$ be a central dense free  abelian subgroup of $N/I^{\dag } $ and $k=R/(R \cap I)$. Then there is an $RL$-submodule $0\ne V\leq W$ such that for any element $0\ne a\in V$ and any element $0\ne b\in aRN$: 
\begin{description}
\item[(i)] $bRL$ is not isomorphic to any proper section of $agRL$ for any $g\in N$;
\item[(ii)] the $kA$-module $(bRL)_{I}$ has a finite series each of whose quotient is isomorphic to some section of the $kA$-module $(aRL)_{I}$; 
\item[(iii)] there is an $RL$-submodule $0\ne cRL\le aRL$ such that the $kA$-module $(cRL)_{I}$ has a finite series each of whose quotient is isomorphic to some section of the $kA$-module $(bRL)_{I}$.
 \end{description}
 \end{lemma}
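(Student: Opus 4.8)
The plan is to adapt the construction of Lemma~\ref{L2.1} to the commutative ring $kA$. Since $P$ annihilates $W$ and $P\le I$, I would regard $W$ as a module over $\tilde R*\tilde N=RN/PRN$; over $\tilde R*\tilde K$ it is torsion-free, so Lemma~\ref{L2.1} applies (to the series $D\le K\le L$, and, when the uniformity of its part~(iii) is needed, with the dense normal subgroup $L$ playing the part of $N$), and I would replace $W$ by a submodule on which the conclusions of that lemma hold; this is harmless since every conclusion of the present lemma is inherited by submodules. Two standing facts will be used repeatedly. First, $RL/IRL=(RK/I)*(L/K)$ is Noetherian, because $RK/I$ is finite dimensional over the field $k=R/(R\cap I)$ (as $\bigl|K/I^{\dag}\bigr|<\infty$) and $L/K$ is free abelian of finite rank; hence $(aRL)_{I}$ is a Noetherian $kA$-module for every $0\ne a$, since $A$ has finite index in $L/I^{\dag}$ by the discussion preceding the lemma. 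Secondly, since $W$ is $\tilde R*\tilde K$-torsion-free, no nonzero submodule of $W$ is annihilated by $I$ (the only exception being $I=P$, which forces $D=K$ and is treated the same way, but more easily); I would also assume $(W)_{I}\ne 0$, the opposite case being trivial. Throughout, as $L$ is normal in $N$, right multiplication by $g\in N$ is an $RL$-isomorphism $aRL\cong agRL$; writing $b\in aRN$ as $b=\sum_{i=1}^{t}ag_{i}r_{i}$ with the $g_{i}$ in a transversal of $L$ in $N$ and $r_{i}\in RL$ gives $bRL\le M:=\sum_{i=1}^{t}ag_{i}RL$, an $RL$-submodule of a finite sum of copies of $aRL$.

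\textbf{The invariant and the choice of $V$.} Put $\delta(a)=\operatorname{Kdim}_{kA}(aRL)_{I}$ and fix $0\ne a_{0}\in W$ with $\delta(a_{0})=\alpha$ minimal. Imitating the localisation device from the proof of Lemma~\ref{L2.1}(iii) --- over $RL$ as there, and at the level of $(-)_{I}$ by localising $(a_{0}RL)_{I}$ at a minimal prime of $kA$ of dimension $\alpha$ in its support --- I would shrink to a nonzero $RL$-submodule $V\le a_{0}RL$ such that for every $0\ne a\in V$ the module $aRL$ is uniform over $RL$ and $(aRL)_{I}$ is $\alpha$-critical over $kA$ (every proper quotient has Krull dimension $<\alpha$). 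This $V$ is the module claimed in the statement.

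\textbf{Proof of (ii) and (iii).} For~(ii), the exact sequence
\[
0\longrightarrow \frac{bRL\cap MI}{(bRL)I}\longrightarrow (bRL)_{I}\longrightarrow \frac{bRL}{\,bRL\cap MI\,}\longrightarrow 0
\]
presents $(bRL)_{I}$ as an extension of a submodule of $(M)_{I}$ by a $kA$-module annihilated by $I$ (because $bRL\cap MI\subseteq bRL$ forces $(bRL\cap MI)I\subseteq (bRL)I$). Now $(M)_{I}$ is a quotient of $\bigoplus_{i}(ag_{i}RL)_{I}\cong (aRL)_{I}^{\oplus t}$, and the $I$-annihilated term, by the same factorisation applied to $MI\le M$ together with the Noetherianness of $RL/IRL$, is built in finitely many steps from sections of $(aRL)_{I}$; since the class of $kA$-modules admitting a finite series whose quotients are sections of $(aRL)_{I}$ is closed under subobjects, quotients and extensions, $(bRL)_{I}$ lies in it, which is~(ii). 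The same display with $a_{0}RL$ in place of $M$ and $aRL$ in place of $bRL$ gives $\operatorname{Kdim}_{kA}(aRL)_{I}\le\alpha$, hence $=\alpha$ by minimality; and then $\operatorname{Kdim}_{kA}(bRL)_{I}\le\operatorname{Kdim}_{kA}(aRL)_{I}=\alpha$ by~(ii), while $\ge\alpha$ by minimality of $\alpha$, so $\operatorname{Kdim}_{kA}(bRL)_{I}=\alpha$. For~(iii), the uniformity arranged above yields $0\ne c\in aRL$ with $cRL\le bRL$, and the argument of~(ii) with $bRL$ in place of $M$ shows that $(cRL)_{I}$ has a finite series whose quotients are sections of $(bRL)_{I}$.

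\textbf{Proof of (i) and the main obstacle.} Suppose $bRL$ is isomorphic to a proper section $S=M_{1}/M_{2}$ of $agRL\cong aRL$. Controlling the $I$-annihilated discrepancy exactly as in~(ii), and using the standing fact that $bRL$, being a nonzero submodule of $W$, cannot be annihilated by $I$ (so $S$ cannot be entirely $I$-torsion), one sees that $(S)_{I}$ is a proper section of the $\alpha$-critical module $(aRL)_{I}$, whence $\operatorname{Kdim}_{kA}(bRL)_{I}=\operatorname{Kdim}_{kA}(S)_{I}<\alpha$, contradicting the equality just established. This last step is the crux of the proof. The functor $(-)_{I}$ is not exact, so neither the finite-series property nor the Krull dimension is transported for free along a submodule inclusion; to make the argument watertight one must (a) treat $M=\sum_{i}ag_{i}RL$ as, essentially, a direct sum, via a Proposition~\ref{P2.1}-type induced-module description of $aRN$, so that $(aRL)_{I}$ genuinely embeds in $(M)_{I}$; (b) pin down the $I$-annihilated error terms so that they too are built from the modules in play, which requires an $I$-adic control of the kernels that appear; and (c) guarantee the strict drop of Krull dimension on a genuinely proper section, which is exactly where the $\tilde R*\tilde K$-torsion-freeness furnished by Lemma~\ref{L2.1} and the critical and uniform refinement of the choice of $a_{0}$ are both indispensable. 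The proof is a triangulation of these three ingredients rather than a single estimate.
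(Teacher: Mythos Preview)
Your approach via Krull dimension over $kA$ is genuinely different from the paper's, which proves~(i) through the torsion/torsion-free dichotomy over $RX/PRX$ for $X\in M\chi(aRL)$ supplied by Lemma~\ref{L2.1}: once that lemma is in force, every proper section of $agRL$ is $RX/PRX$-torsion while $bRL\le W$ is $RX/PRX$-torsion-free, so no functor-exactness issue ever arises. Your route has two concrete gaps that you flag but do not close.

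First, your argument for~(iii) is wrong as stated. You write ``the uniformity arranged above yields $0\ne c\in aRL$ with $cRL\le bRL$'', but uniformity of $aRL$ over $RL$ says nothing about $bRL$ when $b\in aRN\setminus aRL$; there is no reason for $aRL\cap bRL$ to be nonzero. The paper does not find such a $c$. Instead, writing $b=\sum_{i=1}^{n} b_ig_i$ with $b_i\in aRL$ and $g_i\in N$, it shows by induction on $n$ that $bRL$ contains a submodule $dRL$ \emph{isomorphic} to $cgRL$ for some $0\ne c\in aRL$ and $g\in N$, using~(i) to exclude the possibility that $bRL$ is only a proper section of $b_1g_1RL$. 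Centrality of $A$ in $N/I^\dag$ then gives $(cgRL)_I\cong (cRL)_I$ as $kA$-modules, and~(ii) applied to $d\in bRL$ finishes.

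Second, your proof of~(ii) never establishes the Artin--Rees control that your iterated factorisation needs: to terminate you would need $bRL\cap MI^{n}\subseteq (bRL)I$ for some $n$, but $RL$ is not Noetherian and $IRL$ has no a~priori Artin--Rees property. The paper obtains exactly this by invoking Proposition~\ref{P2.1} to pass to a finitely generated dense $H\le L$ with $aRL=aRH\otimes_{RH}RL$, $b\in aRH$ and $I^{\dag}H=L$; then $(aRL)_I\cong (aRH)_{I_1}$ and $(bRL)_I\cong (bRH)_{I_1}$ as $kA$-modules by \cite[Lemma~2.2.6]{Tush2000}, and in the finitely generated setting the relevant ideal is polycentral, so the finite-series statement follows from \cite[Lemma~2.4.1]{Tush2000}. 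Your~(i) inherits the same defect: passing from a proper section $S$ of $aRL$ to a \emph{proper} section of $(aRL)_I$ with strictly smaller Krull dimension again requires this control, and the claim that one can shrink at the $RL$ level so that $(aRL)_I$ becomes genuinely $\alpha$-critical (rather than merely of Krull dimension~$\alpha$) is not justified---localising $(a_0RL)_I$ at a prime of $kA$ does not hand you back an element of $W$.
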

\begin{proof} 
 (i). It follows from Lemma  \ref{L2.1}(i) that for any element ${ 0 }\ne {  a }\in {  W}$ and any subgroup ${ X}\in { M}\chi { (aRL)}$ the module $W$ is $RX/PRX$-torsion-free. According to Lemma  \ref{L2.1}(iii), there exists an $RL$-submodule $0\ne V\le W$ such that for any element $0\ne a\in V$ and any element ${ 0 }\ne {  b }\in {  aRL}$ the following relation holds ${ bRL}\cap { aRX}\ne { 0}$. So, for any submodule ${ 0 }\ne U\le {  aRL}$ the quotient module ${  aRL/U}$ is $RX/PRX$-torsion. Since $K\le X$ and the quotient group $N/K$ is abelian, it is easy to see that $X$ is a normal subgroup of $N$ and therefore the quotient module ${agRL/U}$ is $RX/PRX$-torsion for any ${ RL}$-submodule ${ 0 }\ne U\le { agRL}$ and any element $g\in N$. Therefore we can conclude that any proper section of $agRL$ is $RX/PRX$-torsion. On the other hand, the module $aRN$ is $RX/PRX$-torsion-free and a contradiction is obtained. 
 \par  
(ii) At first, we assume that $0\ne b\in aRL$. By Proposition  \ref{P2.1}, there exists  a finitely generated dense subgroup ${ H }\le {  L}$ such that ${ aRL = aRH}\otimes _{{ RH}} { RL}$. Obviously,   one can  choose the subgroup $H$ such that $0\ne b\in aRH$ and $I^{\dag } H=L$.  Then, evidently, $ bRL = bRH\otimes _{{ RH}} { RL}$. 
 \par  
Let $K_{1} =K\cap H$ and $I_{1} =I\cap RK_{1} $. It follows from  \cite[Lemma 2.2.6]{Tush2000} that $(aRL)_{I}\cong (aRH)_{I_1} $ and $(bRL)_{I}\cong (bRH)_{I_1} $, where $(bRL)_{I}$ and $ (bRH)_{I_1} $ are considered as  $kA$-modules. Thus, it is sufficient to show that $(bRH)_{I_1} $ has a finite series with factors isomorphic to some section of $(aRH)_{I_1} $ considered as  $kA$-modules. 
 \par  
The ideal $I_{1} $ is $H$-grand and hence there exists an $H$ -invariant subgroup $F\le K_{1}$ such that the quotient group $F/ I_{1}^{\dag }$ is abelian and $I_{1} =(RF\bigcap I _{1} )RK_{1} $. As $\left|F/ I_{1}^{\dag }  \right|<\infty $, we have $\left|H:C\right|<\infty $ , where $C=C_{H} (F/ I_{1}^{\dag }  )$. The arguments of the proof of  \cite[Lemma 2.4.1(i)]{Tush2000} shows that $(RF\bigcap I _{1} )RC$ is a polycentral ideal of $RC$. As $\left|H:C\right|<\infty $, we see that $aRH$ is a finitely generated $RC$ -module and, as $I_{1} =(RF\bigcap I _{1} )RK_{1} $, we can conclude that $aRH(RF\bigcap I _{1} )RC=aRHI_{1} $. Then the assertion follows from  \cite[Lemma 2.4.1(ii)]{Tush2000}. 
 \par  
 Suppose now that $0\ne b\in aRN$. We can present the element $b$  in the form $b=b_{1} g_{1} +...+b_{n} g_{n} $ , where $b_{i} \in aRL$ and $g_{i} \in N$. Then one can see that $bRL\cong (\oplus _{i=1}^{n} b_{i} RLg_{i} )/U$, where $U$ is an $RL$-submodule of $\oplus _{i=1}^{n} a_{i} g_{i} RL$, and hence $(bRL)_{I}\cong (\oplus _{i=1}^{n} ((b_{i}RL)_{I})g_{i} )/U_{1} $, where $U_{1}$ is an $RL$-submodule of $\oplus _{i=1}^{n} ((b_{i}RL)_{I})g_{i} $.  $A$ is a central subgroup of  $N/I^{\dag }$ and hence $((b_{i}RL)_{I})g_{i} )\cong (b_{i}RL)_{I}$ considered as $kA$-modules. Therefore, $(bRL)_{I}\cong (\oplus _{i=1}^{n} ((b_{i}RL)_{I}))/U_{1} $, where $U_{1} $ is an $RL$-submodule of $\oplus _{i=1}^{n} ((b_{i}RL)_{I})$. As it was shown above, the quotient module $(b_{i}RL)_{I}$ has a finite series with factors isomorphic to some sections of $(aRL)_{I}$.
 \par  
(iii). The element $b$ may be presented in the form $b=b_{1} g_{1} +...+b_{n} g_{n} $ , where $b_{i} \in aRL$ and $g_{i} \in N$. We  prove by induction on $n$  that $bRL$ contains a submodule $dRL$ isomorphic to $cgRL=cRLg$, where $0\ne c\in aRL$ and $g\in N$.
 Obviously, $bRL+(b_{2} g_{2} +...b_{n} g_{n} )RL=b_{1} g_{1} RL+(b_{2} g_{2} +...b_{n} g_{n} )RL$ and hence $bRL+(b_{2} g_{2} +...b_{n} g_{n} )RL/(b_{2} g_{2} +...b_{n} g_{n} )RL=b_{1} g_{1} RL+(b_{2} g_{2} +...b_{n} g_{n} )RL/(b_{2} g_{2} +...b_{n} g_{n} )RL$. If $bRL\bigcap (b_{2} g_{2} +...b_{n} g_{n} )RL \neq 0$ then $bRL\cong b_{1} g_{1} RL/(b_{1} g_{1} RL\bigcap (b_{2} g_{2} +...b_{n} g_{n} )RL)$ but it contradicts (i). Thus,  $b_{1} g_{1} RL\bigcap (b_{2} g_{2} +...b_{n} g_{n} )RL=0$. Therefore, $bRL\cong b_{1} g_{1} RL$  and hence we can put $c=b_{1} $ and $g=g_{1}$. 
 \par  
If there is $0\ne d_{1} \in bRL\bigcap (b_{2} g_{2} +...b_{n} g_{n} )RL$ then $d_{1} $ can be presented in the form $d_{1} =e_{2} g_{2} +...e_{n} g_{n} RL$, where $e_{i} \in aRL$. As $0\ne d_{1} RL\le bRL$, changing the element $b$ by $d_{1} $ we can apply the induction hypothesis. 
 \par   
 By (ii), $(dRL)_{I}$ has a finite series each of whose quotient is isomorphic to some section of $(bRL)_{I}$ considered as $kA$-modules and it follows from step 1 that $(cRLg)_I$ has such a series. Since $A$ is a central subgroup of $N/I^{\dag } $, we can conclude that $(cRLg)_I$ and $(cRL)_{I}$ are isomorphic as $kA$-modules. 
 \end{proof}

Let $R$ be a ring, let $V$ be an $R$-module and $U$ be a submodule  of  $V$. An ideal $I$ of $R$ culls $U$ in $V$ if $VI<V$ and  $VI+U=V$ (see  \cite{Broo88}). 
\par
 Let $R$ be a commutative domain, $K$ be a normal subgroup of a group $H$. According to  \cite[Introduction, p. 89]{Broo88}, we say that an $RK$-module $0\ne V$ is $H$-ideal critical if for any submodule $0\ne V_{1} \le V$ there is an $H$-invariant ideal $I$ of $RK$ such that: 
 \begin{enumerate}[(i)]
\item[(i)] $I$ culls $V_{1} $ in $V$;
\item[(ii)] $I$ has the weak Artin-Rees property that is, if $U$ is any finitely generated $RK$-module with submodule $U_{1} $ then $UI^{n} \bigcap U_{1} \le U_{1} I$ for some $n\in {\mathbb N}$. 
 \end{enumerate}
 
 If $G$ is a group then the $FC$-center $\Delta(G)=\left\{{g\in G|\left|{G:{C_G}(g)}\right|<\infty}\right\}$ of $G$ is a characteristic subgroup of $G$. If $N$ is a normal subgroup of $G$ then $\Delta_N(G)=\Delta(G) \bigcap N $. If the normal subgroup $N$ is torsion-free minimax nilpotent then, by \cite[Lemma 1(ii)]{Tush2024}, $\Delta_N(G)$ is a central $G$-invariant isolated subgroup of $N$.


 \begin{proposition}\label{P2.2}

Let $N$ be a minimax nilpotent torsion-free normal subgroup of a soluble group $G$ of finite torsion-free rank, let $D=\Delta_N (G) = N \bigcap \Delta (G)$, $K$ be a $G$-invariant subgroup of $N$  such that $D\leq H$ and $H$ be a finitely generated subgroup of $K$. Let $R$ be a finitely generated commutative domain and let $P$ be a $G$-invariant faithful prime ideal of $RD$ such that $char \, RD/P \notin Sp(N)$. Let  $J$ be a right ideal of $RH$ such that $P \cup RH < J$.  Then:   
 \begin{description}
 \item[(i)] there is a $G$-grand ideal $I$ of $RK$ such that $P \leq I$   and  ${ RH }\cap {I}$ culls $J$ in  $RH$; 
\item[(ii)] if $H=N$ then $RH/PRH $  is critical  uniform $RH$-module. 
 \end{description}
 \end{proposition}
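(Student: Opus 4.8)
The idea is to pass modulo $P$ to the crossed‑product picture, to reduce the construction of a $G$‑grand ideal to the choice of an ideal in a \emph{finite commutative} group ring, to make that choice by the Artin--Rees machinery of \cite{Tush2000} (and its minimax refinements in \cite{Tush2023,Tush2024}), and then to deduce (ii) formally from (i). Since $PRH\le J$, the module $M:=RH/J$ is annihilated by $P$, hence is a nonzero module over $RH/PRH=\tilde R*\tilde H$, where $\tilde R=RD/P$ is a finitely generated commutative domain ($D\le H$ is finitely generated, being a subgroup of the finitely generated nilpotent group $H$), $\tilde H=H/D$, and — as $D=\Delta _N(G)$ is central in $N$ — $\tilde R$ is a central subring of $RK/PRK=\tilde R*\tilde K$, $\tilde K=K/D$. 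A first observation is that the hypothesis $\mathrm{char}(RD/P)\notin Sp(N)$, via $Sp(K)\subseteq Sp(N)$, forces $K$ to have characteristic subgroups of finite index, of index prime to $\mathrm{char}(RD/P)$: no infinite $p$‑section of $K$ can obstruct a characteristic subgroup of $p$‑power index. Fixing such a subgroup as a candidate for the ``$I^{\dag }$'' of a grand ideal — and taking the accompanying $G$‑invariant subgroup $F\ge I^{\dag }$ with abelian section $F/I^{\dag }$ to involve $D$, which is legitimate because $D$ is central — the minimax character of $K$ is absorbed into the finite index $|K:I^{\dag }|$.

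Indeed, such a $G$‑grand ideal $I$ of $RK$ is exactly an ideal of the form $(RF\cap I)RK$, in which $RF\cap I$ contains the relative augmentation ideal $\omega (RF;I^{\dag })$ and is therefore the preimage of an ideal $\mathfrak a$ of the finitely generated \emph{commutative} ring $R[F/I^{\dag }]$. The defining conditions translate into conditions on $\mathfrak a$: $P\le I$ becomes $\mathfrak a\supseteq \bar P$, the image of $P$ in $R[F/I^{\dag }]$; $R/(R\cap I)$ a field becomes $R\cap \mathfrak a$ maximal in $R$; $I\ne RK$ becomes $\mathfrak a$ proper. The culling requirement is analysed after computing $RH\cap I$ out of $H\cap F$, $H\cap I^{\dag }$ and $\mathfrak a$ — essentially $RH\cap I=(R(H\cap F)\cap I)RH$, again governed by an ideal of a finite commutative group ring — and amounts to demanding that this ideal, lifted back to $RH$, generate $RH$ together with $J$. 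Lemmas \ref{L2.1} and \ref{L2.2}, and through them Proposition \ref{P2.1}, are used to control $M$ under these reductions. In the base case $K=D$ one necessarily has $H=D$ and $RK=RD$ commutative, and it suffices to take for $I$ the preimage in $RD$ of a maximal ideal $\mathfrak m$ of $\tilde R=RD/P$ with $\mathfrak m\not\supseteq J/P$; the field and finite‑index conditions then follow from the Nullstellensatz for finitely generated rings.

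The step I expect to be the main obstacle is producing $\mathfrak a$ in the general case: it must satisfy the three ``grand'' requirements and simultaneously be transverse enough to the image of $J$ to force $RH\cap I+J=RH$, all while staying above $\bar P$ and strictly below the unit ideal. The tool is the weak Artin--Rees property of polycentral ideals via \cite[Lemma 2.4.1]{Tush2000}: passing to the finite‑index subring $RC$ with $C=C_{K}(F/I^{\dag })$, where $(RF\cap I)RC$ is polycentral, one obtains the culling together with the Artin--Rees property; faithfulness of $P$ — which for $D\ne 1$ forces $\tilde R=RD/P$ to be an infinite finitely generated domain, not a field, leaving room strictly below the unit ideal — is what keeps $\mathfrak a$ proper.

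Finally, (ii) follows from (i) on taking $H=K=N$, which is then finitely generated and polycyclic. For a nonzero submodule $V_1=J/PRN$ of $V=RN/PRN$, part (i) supplies a $G$‑grand ideal $I$ of $RN$ with $P\le I$ such that $I=RN\cap I$ culls $V_1$ in $V$; and any $G$‑grand ideal has the weak Artin--Rees property, since, as in the proof of Lemma \ref{L2.2}, $(RF\cap I)RC$ is polycentral in the finite‑index subring $RC$ and \cite[Lemma 2.4.1]{Tush2000} applies. Hence $V$ is $G$‑ideal critical. Moreover $RN/PRN=\tilde R*\tilde N$ with $\tilde N=N/D$ torsion‑free nilpotent, hence orderable; comparing leading terms with respect to such an order and using that the twisting units $t(g,h)$ are invertible shows $\tilde R*\tilde N$ is a domain, and, being Noetherian, it is right Ore, hence right uniform. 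Therefore $RN/PRN$ and every submodule of it are uniform.
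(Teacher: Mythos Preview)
Your argument for part (ii) is essentially correct and matches the paper's: polycentrality of grand ideals (via \cite[Lemma 2.4.1]{Tush2000}) gives the weak Artin--Rees property, part (i) supplies the culling ideal, and $RN/PRN=\tilde R*\tilde N$ is an Ore domain (the paper cites \cite[Corollary 37.11]{Pass89} for this rather than arguing through orderability, but the content is the same), hence uniform.

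The gap is in part (i). You correctly translate the problem into finding an ideal $\mathfrak a$ of the finite commutative group ring $R[F/I^{\dag}]$ satisfying the three ``grand'' conditions together with the transversality $(RH\cap I)+J=RH$, but you then explicitly flag ``producing $\mathfrak a$ in the general case'' as the main obstacle and do not resolve it. The sentence ``passing to the finite-index subring $RC$ \dots\ one obtains the culling together with the Artin--Rees property'' is circular: the polycentrality and Artin--Rees arguments tell you something about an ideal $I$ \emph{already chosen}, they do not construct one, and in particular they give no reason why $(RH\cap I)+J$ should equal $RH$. Your appeals to Lemmas \ref{L2.1}, \ref{L2.2} and Proposition \ref{P2.1} are also misplaced here --- those results concern the structure of modules over $RN$, not the existence of grand ideals in $RK$.

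The paper's proof of (i) is much shorter and bypasses this entirely. It picks any element $a\in J\setminus PRH$ and invokes the main theorem of \cite{Tush2024} to obtain a $G$-grand ideal $I$ of $RK$ with $P\le I$ such that the image of $a$ in $RK/I$ is \emph{invertible}. Then $a$ is a non-zero-divisor in $(RH+I)/I\cong RH/(RH\cap I)$; since the latter is a finite-dimensional $k$-algebra ($k=R/(R\cap I)$), $a$ is invertible there, so $aRH+(RH\cap I)=RH$, and as $aRH\le J$ this gives the culling. You mention \cite{Tush2023,Tush2024} only as providing ``minimax refinements'' of Artin--Rees machinery, but the theorem from \cite{Tush2024} is the entire engine of part (i): it \emph{constructs} the grand ideal rather than merely asserting its properties.
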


\begin{proof} (i) Let $a \in J\setminus PRH$ then, by \cite[Theorem ]{Tush2024} there is a $G$-grand ideal $I$  of $RK$ such that the image $\tilde{a}$ of $a$ in 
$RK/I$ is invertible in $RK/I$. Therefore, $\tilde{a}$ is not a zero divisor in $RK/I$ and hence $\tilde{a}$ is not a zero divisor in $(RH+I)/I$. Since $(RH+I)/I \cong RH/(RH \bigcap I)$, it implies that  $\tilde{a}$ is not a zero divisor in  $RH/(RH \bigcap I)$. Then, as $RH/(RH  \bigcap I)$ is a finite dimensional $k$-algebra, where  $k=R\bigcap I$, $\tilde{a}$ is invertible in $RH/(RH \ bigcap I)$. Therefore, $aRH+I=RH$ and hence, as 
$aRH \leq J $, the assertion follows. 
 \par  
(ii). By  \cite[Lemma 2.4.1(i)]{Tush2000}, any $H$-grand  ideal    $I$ of  $RK$  is polycentral and it follows from  \cite[Theorem 6.12]{Wehr09} that $I$ has the weak Artin-Rees property. Then (i) implies that $RH/PRH$ is $RH$-ideal critical. Evidently, $RK/PRK $ is a crossed product $\tilde{R}*\tilde{H}$  of the commutative domain  $\tilde{R}=RD/P$ and the torsion-free nilpotent group $\tilde{H}= H/D $. Then It follows from  \cite[Corollary 37.11]{Pass89} that $RK/PRK$ is an Ore domain and hence $RK/PRK$ is uniform. 
 \end{proof}

\begin{lemma}\label{L2.3}
Let $H$ be a torsion-free finitely generated nilpotent group which has a series $D \leq K$ of normal subgroups such that  the subgroup $D$ is isolated abelian and the quotient group $H/K$ is free abelian.   Let $\chi $ a full system of subgroups over $K$. 
Let $R$ be a finitely generated domain and let $0\ne W$ be an $RH$-module which is annihilated by an $H$-invariant faithful prime ideal $P$ of  $RD$ and such that the module $W$ is $RK/PRK$-torsion-free. Then there are a  cyclic $RH$-submodule ${ 0 }\ne {a}RH\le {W}$ and a right ideal $J$ of $RK$ such that if $I$ is an $H$-grand ideal of $RK$ such that $P\le I$ and $I$ culls $J$ in $RK$ then  for any cyclic $RH$-submodule ${0}\ne bRH\le aRH$ we have $X\in \chi (bRH)$ if and only if  the quotient module $(bRH)_I$ is not $kA_{X}$-torsion, where $A$ is a characteristic central torsion-free subgroup of finite index in $H/I^{\dag }$, $A_{X} =A\bigcap X/I^{\dag } $ and $k=R/(R \cap I)$. 
 
 \end{lemma}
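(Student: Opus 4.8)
The plan is to move to the crossed‑product picture, normalise $W$ by Lemma \ref{L2.1}, and then manufacture $J$ so that the hypothesis ``$I$ culls $J$'' forces reduction modulo $I$ to be ``transverse'' to every direction $X\in\chi$, after which the equivalence falls out of flatness/localisation bookkeeping. First, since $WP=0$, view $W$ as a module over $RH/PRH=\tilde R\ast\tilde H$, with $\tilde R=RD/P$ a commutative domain and $\tilde H=H/D$ torsion‑free finitely generated nilpotent; for $K\le X\le H$ the subring $RX/PRX=\tilde R\ast\tilde X$ is a Noetherian Ore domain, and ``$RX/PRX$-torsion(-free)'' means ``$\tilde R\ast\tilde X$-torsion(-free)''. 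Apply Lemma \ref{L2.1} with its ambient group $N=L$ taken to be $H$ itself (the hypotheses transfer: $D$ and $H/K$ are torsion‑free abelian, $H$ is dense in itself, $D$ is isolated, $H/K$ is free abelian). This gives $0\ne V\le W$ on which $\chi(bRH)$ has a constant value $\chi_{0}$ for every $0\ne b\in V$ — hence for every $0\ne b\in aRH$ once $0\ne a\in V$ — with $V$ being $\tilde R\ast\tilde X$-torsion-free for $X\in\chi_{0}$ and $\tilde R\ast\tilde X$-torsion for $X\in\chi\setminus\chi_{0}$, every $aRH$ uniform, and $K\in\chi_{0}$ always. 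Replace $W$ by $V$ and fix $0\ne a\in V$. Two facts come for free: since $A$ is central and torsion‑free in $H/I^{\dag}$, the finite group $A_{K}=A\cap K/I^{\dag}$ is trivial and the lifts of $A_{X}$ are central in $RX/IRX=(RK/I)\ast(X/K)$, so $kA_{X}$ is a central subdomain over which $RX/IRX$ is nonzero and free; and since $W$ is $\tilde R\ast\tilde K$-torsion-free and $I$ is proper, $b\in bI$ would force $b(1-\mu)=0$ with $\mu\in I$, hence $1-\mu\in PRK\subseteq I$ and $I=RK$, so $(bRK)_{I}\ne0$ for every $0\ne b\in W$ (settling already the case $X=K$, where $kA_{K}=k$).

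Next I would build $J$. For each $X\in\chi\setminus\chi_{0}$ there is a partial right ring of quotients $Q_{X}=(\tilde R\ast\tilde H)(\tilde R\ast\tilde X\setminus\{0\})^{-1}$. I would choose $J$ (a right ideal of $RK$) so that the hypothesis ``$I$ proper and $I+J=RK$'' forces, for every such $X$, that every nonzero element of $\tilde R\ast\tilde X$ maps to a non-zero-divisor in the finite-dimensional $\mathrm{Frac}(kA_{X})$-algebra $C_{X}=(RX/IRX)\otimes_{kA_{X}}\mathrm{Frac}(kA_{X})$ — equivalently, that $\tilde R\ast\tilde H\to B_{X}:=(RH/IRH)\otimes_{kA_{X}}\mathrm{Frac}(kA_{X})$ factors through $Q_{X}$, so that $B_{X}$ becomes a $Q_{X}$-algebra. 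Concretely, the $\tilde R\ast\tilde X$-torsion of $aRH$ supplies, for each finitely generated piece, annihilators whose $RK$-coefficients, together with witnesses that they remain non-zero-divisors, are thrown into $J$, after which the identity $1\in I+J$ forces the required images to survive with the right property.

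With $J$ fixed, let $I$ be any $H$-grand ideal with $P\le I$ culling $J$ in $RK$. For $X\in\chi\setminus\chi_{0}$: the module $bRH\le aRH$ is $\tilde R\ast\tilde X$-torsion, so $bRH\otimes_{\tilde R\ast\tilde H}Q_{X}=0$, whence $bRH\otimes_{\tilde R\ast\tilde H}B_{X}=0$, i.e.\ $(bRH)_{I}\otimes_{kA_{X}}\mathrm{Frac}(kA_{X})=0$ and $(bRH)_{I}$ is $kA_{X}$-torsion. For $X\in\chi_{0}$, $X\ne K$: from $bRX=bRK\otimes_{RK}RX$ and the $H$-invariance of $I$, $(bRX)_{I}=bRX/bRXI\cong\bigoplus_{\nu}(bRK)_{I}\,x^{\nu}$ is induced from the nonzero $k$-space $(bRK)_{I}$ along the free abelian group $X/K$; using $A_{K}=1$ (so the image of $A_{X}$ in $X/K$ is injective of finite index) and $\mathrm{Ann}_{RK/I}((bRK)_{I})\cap k=0$, the submodule $\bar b\cdot kA_{X}$ is free of rank one over $kA_{X}$. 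This survives in $(bRH)_{I}$ once $bRX\cap bRHI=bRXI$ — immediate when $H\in\chi_{0}$ from the direct-sum decomposition $bRH=\bigoplus_{\rho}bRX\rho$, and otherwise extracted from the weak Artin--Rees property of the polycentral ideal $I$; so $(bRH)_{I}$ is not $kA_{X}$-torsion. Combining the cases yields $X\in\chi_{0}=\chi(bRH)\iff(bRH)_{I}$ is not $kA_{X}$-torsion, for every $0\ne b\in aRH$ and every $X\in\chi$.

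The real obstacle is the construction of $J$: it must be a single right ideal for which the weak hypothesis ``$I$ proper and $I+J=RK$'' already forces $I$ to be transverse to every $X\in\chi\setminus\chi_{0}$ in the strong sense above — non-zero-divisor images in $C_{X}$, not just nonzero ones — and it must do so uniformly over all admissible grand $I$. Here one must bring in the grandness of $I$, the critical/uniform nature of $aRH$ from Lemma \ref{L2.1} (and Lemma \ref{L2.2}), and the structure theory of polycentral ideals used in the proofs of Propositions \ref{P2.1}--\ref{P2.2}, in order to exclude the degenerate configurations; the weak Artin--Rees input needed when $H\notin\chi_{0}$ is a secondary, more routine point. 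Everything else — transferring the hypotheses to Lemma \ref{L2.1}, the flatness and finite-generation bookkeeping over $kA_{X}$, and the crossed-product manipulations — is standard.
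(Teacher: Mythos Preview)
Your outline has the right shape --- normalise via Lemma~\ref{L2.1}, then manufacture $J$ so that culling forces reduction modulo $I$ to detect $\chi_0$ --- but the construction of $J$, which you correctly flag as ``the real obstacle'', is not a secondary technicality: it is the entire content of the lemma, and your sketch does not supply it. Asking that every nonzero element of $\tilde R\ast\tilde X$ become a non-zero-divisor in $C_X$ is a condition on infinitely many elements, and there is no mechanism by which the single relation $I+J=RK$ can enforce it uniformly for all grand $I$; throwing finitely many ``witnesses'' into $J$ only controls finitely many elements. The paper does not attempt this. It imports two results of Brookes: \cite[Lemma~8]{Broo88} produces a right ideal $J_0$ such that culling $J_0$ already gives the full equivalence $X\in\chi(aRH)\iff(aRH)_I$ not $kA_X$-torsion, but only for the fixed generator $a$; and \cite[Lemma~14]{Broo88} produces, for each $X\in\chi(aRH)$, a right ideal $J_X$ such that culling $J_X$ forces the intersection identity $aRXI^n=aRHI^n\cap aRX$ for all $n$. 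One then takes $J=J_0\cap\bigcap_X J_X$.

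Your handling of the direction $X\in\chi_0\Rightarrow(bRH)_I$ not $kA_X$-torsion also misidentifies the input. When $H\notin\chi_0$ the equality $bRX\cap bRHI=bRXI$ does not follow from the weak Artin--Rees property of $I$; that property only gives $\bigcap_n aRXI^n=0$ (via Proposition~\ref{P2.2}(ii)). The intersection identity is precisely the output of \cite[Lemma~14]{Broo88}, and it is available only for elements $c\in aRX$, not for arbitrary $b\in aRH$. The paper therefore argues by transfer rather than directly: for $X$ maximal in $\chi(bRH)$ it uses Lemma~\ref{L2.1}(iii) to find $0\ne c\in aRX\cap bRH$, shows via the filtration $aRXI^{m-1}/aRXI^m$ that $(cRH)_I$ is not $kA_X$-torsion, and then pushes this up to $(bRH)_I$ by Lemma~\ref{L2.2}(ii). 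The reverse direction likewise pushes non-torsion from $(bRH)_I$ up to $(aRH)_I$ via Lemma~\ref{L2.2}(ii) and then invokes the equivalence at $a$ coming from $J_0$. So the architecture is ``establish the equivalence at $a$ via Brookes, then transfer to $b$ via Lemma~\ref{L2.2}(ii)'', not ``establish it directly at every $b$''; your attempt to do the latter is what drives you toward the unachievable uniform transversality condition on $J$.
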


\begin{proof}
{\bf  Step 1.} It easily follows from  \cite[Lemma 8]{Broo88} that there exists a right ideal $J_{0} $ of  {$RK$} such that $PRK\le J_{0} $ and if an $H$-grand ideal $I$ of $RK$ culls $J_{0} $ in $RK$ (i.e. $I$ culls $aJ_{0} $ in $V=aRK\cong RK/PRK$) then $X\in \chi (aRH)$ if   only if $(aRH)_{I}\otimes _{kA_{X} } Q_{X} \ne 0$, where $Q_{X} $ denotes the field of fractions of the domain $kA_{X} $. We should note that in fact the relation $(aRH)_{I}\otimes _{kA_{X} } Q_{X} \ne 0$ means that the quotient module $(aRH)_{I}$ is not $kA_{X} $-torsion. 
 \par  
{\bf Step 2.} Suppose that $X\in \chi (aRH)$ then the arguments of the proof of  \cite[Lemma 14]{Broo88} show that there exists a right ideal ${ J}_{{ X}} \le { RK}$ such that $PRK\le J_{X} $ and if an $H$-grand ideal $I$ of $RK$ culls $J_{X} $ in $RK$ then $aRXI=aRHI\cap aRX$. By  \cite[Lemma 3.4.]{Tush2022-1}, $I^{n} $ also culls $J_{X} $ and hence  we can conclude that 
   $aRXI^{n} =aRHI^{n} \cap aRX$.                          
\par 
    Let $0 \neq c \in aRH$. It easily follows from the definition of ${ M} \chi (aRH)$ that ${ aRX}\cong { RX/PRX}$. Then it  follows from Proposition \ref{P2.2}(ii) that $\bigcap _{{ n}\in { N}} { aRXI}^{{ n}} { =0}$. 
        Then there is $m\in {\mathbb N}$ such that $c\in aRXI^{m-1} \backslash aRXI^{m} $. Therefore, $ cRX/(cRX\cap {aRXI}^{m})\cong  (cRX+aRXI^{ m})/aRXI^{ m} $ is a non-zero submodule of ${ aRXI}^{{ m-1}} { /aRXI}^{{ m}} $. 
 \par  
     Let ${ U =(RX/PRX)I}^{{ m-1}} { /(RX/PRX)I}^{{ m}} $. Evidently, $${ U = ((RK/PRK)I}^{{ m-1}} { /(RK/PRK)I}^{{ m}} { )}\otimes _{k(K/I^{\dag })} { (K/I^{\dag })}.$$ Then, as ${ A}\cap  (K/I^{\dag }) = 1$, we see that $U$ is ${ kA}_{{ X}} $-torsion-free and hence so is the quotient module $aRXI^{m-1} /aRXI^{m}$.  It easily implies that the quotient module  ${ (cRX}+aRX{ I}^{{ m}} { )/}aRX{ I}^{{ m}} \cong { cRX/(cRX}\cap aRX{ I}^{{ m}}  )$ is also ${ kA}_{{ X}} $-torsion-free. 
  \par  
     Since $c\in aRX$ and  $aRXI^{n} =aRHI^{n} \cap aRX$  we can conclude that 
   $aRXI^{n} \bigcap cRX=aRHI^{n} \bigcap cRX$
    and hence $ cRX/(cRX \cap aRHI^{ m}  )$ is a $kA_X $-torsion-free module. Since $cRX\le aRXI^{m-1}$, we can see that $ cRH \le aRHI^{ m-1} $ and it implies  $cRHI\le aRHI^{ m}$. Therefore, ${ cRX}\cap cRHI \leq cRX\cap aRHI^m$. So , as ${ cRX/(cRX}\cap  aRHI^m)$ is a $kA_X$-torsion-free module, the quotient module $cRX/(cRX \cap  cRHI) \simeq (cRX+cRH )/cRHI\leq   (cRH)_I$ is not $kA_X$-torsion. 
 \par  
{\bf Step 3.} By Lemma \ref{L2.1} we can choose an element $0\ne a\in W$ such that for any $0\ne b\in aRH$ we have       $\chi (aRH)=\chi (bRH) $  and  $ aRX\cap bRH\ne 0$ for any $X\in { M} \chi (aRH)$.     
 \par  
  Let $J_{0} $ be a right ideal of $RK$ from Step 1 defined for the element $a$ and let $J_{X} $ be a right ideal of $RK$ from Step 2   defined for the element $a$ and for a subgroup $X\in \chi (aRH)$. Put $J=J_{0} \bigcap (\bigcap _{X\in \chi (aRH)} J_{X} )$ and let $I$ be an $H$-grand ideal of $RK$ such that $P \leq I$ and which culls $J$ in $RK$.  Then it follows from  \cite[Lemma 6]{Broo88}] that the ideal $I$ culls $J_{0} $ and $J_{X} $ in $RK$ for each $X\in \chi (aRH)$. 
 \par  
 Let ${0}\ne {bRH}\le {a}RH$ and suppose that for some $X\in \chi $ the quotient module $(bRH)_I$ is not $kA_{X} $-torsion. Then it follows from Lemma \ref{L2.2}(ii) that the quotient module $ (aRH)_I$ is not $kA_{X}$-torsion. Since the ideal $I$ culls $J_{0} $ in $RK$, Step 1 shows that $X\in \chi (aRH)$. Then, as 
 $   \chi (aRH)=\chi (bRH)$, we can conclude that  $X\in \chi (bRH)$. 
 \par  
 Let now $X\in \chi (bRH)$ and show that the quotient module 
 $(bRH)_I$ is not $kA_{X} $-torsion. Evidently, $X\le Y$ for some $Y\in M\chi (bRH)$ and if we show that $(bRH)_I$ is not $kA_{Y} $-torsion then $(bRH)_I$ is not $kA_{X} $-torsion because $A_{X} \le A_{Y} $. Thus, we can assume that $X\in M\chi (bRH)$ and hence, as $aRX\cap bRH\ne 0$, there is $0\ne c\in aRX\bigcap bRH$. As $0\ne c\in aRX$ and $I$ culls $J_X$ in $RK$, Step 2 shows that the quotient module $(cRH)_I$ is not $kA_X $-torsion. Then, as $0\ne cRH\le bRH$, it follows from Lemma \ref{L2.2}(ii) that the quotient module $(bRH)_I$ is not 
 $kA_{X} $-torsion.  
 \end{proof}

 \begin{proposition}\label{P2.3}
Let $N$ be a minimax nilpotent torsion-free group which has a finite series $D\le K\le L$ of normal subgroups such that the subgroup $D$ and the quotient group $N/K$ are torsion free abelian, the subgroup $L$ is dense, the subgroup $D$ is isolated central in $N$ and  the quotient group $L/K$ is free abelian. Let $R$ be a finitely generated domain and  $P$ be an $N$-invariant prime faithful ideal of $RD$ such that $char\, (RD/P)\notin Sp(N)$. Let $0\ne W$ be an $RN$-module annihilated by $P$ and which is $RK/PRK$-torsion-free.  
Then there exist an element ${ 0 }\ne {  a}\in { W}$, a finitely generated dense subgroup $H$ of $K$ and a right ideal $J$ of $R(H\bigcap K)$  such that if $I$ is an $N$-grand ideal of $RK$ such that $P \leq I $ and $I\bigcap R(K\bigcap H)$ culls $J$ in $R(H\bigcap K)$ then  for any ${ 0 }\ne {  b}\in { aRN}$ we have $X\in \chi (bRL)$ if and only if  the quotient module $(bRL)_{I}$ is not $kA_{X} $-torsion, where $A_{X} =A\bigcap X/I^{\dag } $ and $A$  is a characteristic central torsion-free subgroup of finite index in $N/I^{\dag } $. 
 \end{proposition}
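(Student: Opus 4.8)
The plan is to reduce, by the density arguments of Proposition \ref{P2.1}, to the finitely generated situation handled by Lemma \ref{L2.3}, then transport its culling/torsion dictionary back to $L$, and finally pass from cyclic submodules of $aRH$ to arbitrary $bRL$ with $b\in aRN$ via Lemma \ref{L2.2}. First, regarding $W$ as an $RN$-module, I would apply Lemma \ref{L2.1} and then refine inside the resulting $RN$-submodule by Lemma \ref{L2.2}, obtaining a nonzero submodule with an element $0\ne a_{0}$ such that $\chi(bRL)=\chi(a_{0}RL)$ for all $0\ne b\in a_{0}RN$, such that $(bRL)_{I}$ has a finite series with factors isomorphic to sections of $(a_{0}RL)_{I}$ while conversely some $0\ne cRL\le a_{0}RL$ is dominated in the same way by $(bRL)_{I}$ (Lemma \ref{L2.2}(ii),(iii)), and such that $a_{0}RL$ is uniform (Lemma \ref{L2.1}(iii), with $L$ in the role of $N$). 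By Proposition \ref{P2.1} there is a finitely generated dense $H\le L$ with $a_{0}RL=a_{0}RH\otimes_{RH}RL$; since $L/K$ is free abelian of finite rank and $P=(P\cap RE)RD$ for some finitely generated $E\le D$ (\cite[Theorem 3.6(ii)]{Tush2025}), I would enlarge $H$, keeping it finitely generated and dense, so that $E\le H$, $KH=L$, and the preimages in $H$ of a free basis of $L/K$ form a free basis of $H/(H\cap K)$. With $K_{1}=H\cap K$, $D_{1}=H\cap D$, $P_{1}=P\cap RD_{1}$, then $D_{1}$ is isolated abelian in the finitely generated torsion-free nilpotent group $H$, $H/K_{1}\cong L/K$ is free abelian, $P_{1}$ is an $H$-invariant faithful prime ideal of $RD_{1}$ with $P=P_{1}RD$ and $\operatorname{char}(RD_{1}/P_{1})\notin Sp(H)$, and since $PRK\cap RK_{1}=P_{1}RK_{1}$ the module $a_{0}RH$ is $RK_{1}/P_{1}RK_{1}$-torsion-free.

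I would then apply Lemma \ref{L2.3} to $H$, the series $D_{1}\le K_{1}$, the ideal $P_{1}$ and the module $a_{0}RH$, obtaining a cyclic submodule $0\ne aRH\le a_{0}RH$ (used from now on in place of $a_{0}$: it still lies in the chosen submodule of $W$, still satisfies $aRL=aRH\otimes_{RH}RL$ and $\chi(aRL)=\chi(a_{0}RL)$, and retains the Lemma \ref{L2.2} domination) and a right ideal $J$ of $RK_{1}$ such that, for any $H$-grand ideal $I_{1}$ of $RK_{1}$ with $P_{1}\le I_{1}$ culling $J$, and any cyclic $0\ne b_{1}RH\le aRH$ and any $X_{1}$ in the full system $\chi_{K_{1}}$ of $H$ over $K_{1}$, we have $X_{1}\in\chi_{K_{1}}(b_{1}RH)$ if and only if $(b_{1}RH)_{I_{1}}$ is not $kA^{(1)}_{X_{1}}$-torsion, with $A^{(1)}$ a characteristic central torsion-free finite-index subgroup of $H/I_{1}^{\dag}$ and $A^{(1)}_{X_{1}}=A^{(1)}\cap X_{1}/I_{1}^{\dag}$. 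This $J$ (viewed as a right ideal of $R(H\cap K)$), together with $a$ and $H$, is the claimed data.

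Given an $N$-grand ideal $I$ of $RK$ with $P\le I$ such that $I_{1}:=I\cap R(K\cap H)$ culls $J$ in $R(H\cap K)$, one has $P_{1}\le I_{1}$ immediately, and one must verify that $I_{1}$ is again $H$-grand: $R\cap I_{1}=R\cap I=k$ is a field, $I_{1}^{\dag}=K_{1}\cap I^{\dag}$ is of finite index in $K_{1}$ since $|K/I^{\dag}|<\infty$, and from $I=(RF\cap I)RK$ (with $F$ an $N$-invariant subgroup of $K$, $I^{\dag}\le F$, $F/I^{\dag}$ abelian) one must produce an $H$-invariant $F_{1}\le K_{1}$, with $I_{1}^{\dag}\le F_{1}$ and $F_{1}/I_{1}^{\dag}$ abelian, such that $I_{1}=(RF_{1}\cap I_{1})RK_{1}$ --- establishing this last equality is the first of the two main technical points, and is what forces $H$ to have been chosen large enough to see generating sets of the relevant finite-rank sections. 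Granting it, Lemma \ref{L2.3} applies to $I_{1}$, and it remains to transport its conclusion, using: the bijection $X\leftrightarrow X_{1}=X\cap H$ between $\chi$ and $\chi_{K_{1}}$ (with $X=KX_{1}$) afforded by the compatible free bases, under which $X\in\chi(bRL)\iff X_{1}\in\chi_{K_{1}}(bRH)$ for $0\ne b\in aRH$; the isomorphism $(bRL)_{I}\cong(bRH)_{I_{1}}$ of modules over the appropriate abelian group algebra (\cite[Lemma 2.2.6]{Tush2000}); and the embedding $H/I_{1}^{\dag}=H/(H\cap I^{\dag})\hookrightarrow L/I^{\dag}$ carrying $A^{(1)}$ onto the restriction of $A$ (up to commensurability), which matches the two torsion conditions. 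This gives the equivalence for all $0\ne b\in aRH$.

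It remains to treat arbitrary $0\ne b\in aRN$. Here $\chi(bRL)=\chi(aRL)$ by Lemma \ref{L2.1}(ii), so by the case $b=a$ it suffices to show $(bRL)_{I}$ is $kA_{X}$-torsion exactly when $(aRL)_{I}$ is. One direction is immediate from Lemma \ref{L2.2}(ii), since $kA_{X}$-torsion modules are closed under sections and extensions. For the converse I would replace $X$ by $Y\in M\chi(aRL)$ with $X\le Y$ (legitimate as $Y\in\chi(aRL)$ and $A_{X}\le A_{Y}$), assume $(bRL)_{I}$ is $kA_{Y}$-torsion, invoke Lemma \ref{L2.2}(iii) to get $0\ne cRL\le aRL$ with $(cRL)_{I}$ $kA_{Y}$-torsion, and then combine the uniformity of $aRL$ (whence $cRL$ is essential in $aRL$ and $aRL/cRL$ is torsion over $RL/PRL$) with the weak Artin--Rees property of the grand ideal $I$ (\cite[Theorem 6.12]{Wehr09}) to control $(aRL/cRL)_{I}$ and deduce that $(aRL)_{I}$ is $kA_{Y}$-torsion, a contradiction. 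I expect the two hardest points to be (a) that $I\cap R(K\cap H)$ is $H$-grand whenever $I$ is $N$-grand and culls $J$, and (b) this final reduction from cyclic submodules of $aRH$ to arbitrary $bRL$ with $b\in aRN$.
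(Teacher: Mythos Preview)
Your overall plan---reduce to a finitely generated dense $H\le L$ via Proposition \ref{P2.1}, apply Lemma \ref{L2.3} there to obtain $J$, and then propagate via Lemma \ref{L2.2}---is exactly the paper's strategy. The organisation differs in one important respect, and that difference creates the gap you flag as point (b).

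In the paper, Step 1 does not stop at $b\in aRH$: it establishes the full biconditional for \emph{every} $0\ne b\in aRL$. The mechanism is Lemma \ref{L2.1}(iii), which for any such $b$ produces $0\ne c\in bRL\cap aRH$; since $cRL=cRH\otimes_{RH}RL$, the Lemma \ref{L2.3} dictionary applies to $c$, and then Lemma \ref{L2.2}(ii) run along the chain $cRL\le bRL\le aRL$ gives the biconditional for $b$. With Step 1 covering all of $aRL$, Step 2 is immediate: Lemma \ref{L2.2}(iii) hands you $0\ne c\in aRL$ with $(cRL)_I$ built from sections of $(bRL)_I$, and Step 1 applied to $c$ (together with $\chi(cRL)=\chi(bRL)$ from Lemma \ref{L2.1}(ii)) closes the argument without any further work.

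By contrast, you stop Step 1 at $b\in aRH$ and then, when Lemma \ref{L2.2}(iii) returns a $c\in aRL$ that need not lie in $aRH$, you try to pass from ``$(cRL)_I$ is $kA_Y$-torsion'' to ``$(aRL)_I$ is $kA_Y$-torsion'' via uniformity of $aRL$ and the weak Artin--Rees property of $I$. This step is not justified: uniformity gives that $aRL/cRL$ is torsion over the domain $RL/PRL$, but there is no clear way to convert that into control of the $kA_Y$-torsion of $(aRL/cRL)_I$, and the Artin--Rees property of $I$ in $RK$ does not obviously interact with $kA_Y$-torsion of $kA$-modules. The paper simply avoids this by having already proved the equivalence for $c\in aRL$. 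The fix is to insert the paper's intermediate step: for $c\in aRL$, use Lemma \ref{L2.1}(iii) to find $c'\in cRL\cap aRH$, apply your established case to $c'$, and transfer via Lemma \ref{L2.2}(ii). Your concern (a) about $I\cap R(K\cap H)$ being $H$-grand is reasonable, but the paper itself passes over it silently; it is not the crux.
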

\begin{proof}  Lemmas \ref{L2.1} and \ref{L2.2} allow us to choose a cyclic submodule ${ 0 }\ne {  aRN}\le {  W}$ which satisfies the conditions (i)-(iii) of Lemma \ref{L2.1} and the conditions (ii),(iii) of Lemma \ref{L2.2}  which hold for any cyclic submodule ${ 0 }\ne {  bRN}\le {  aRN}$. 
 \par  
{\bf Step 1.} At first, we suppose that $N=L$. Then Proposition \ref{P2.1} shows that there is a finitely generated dense subgroup $H$ of $L$ such that ${ aRL = aRH}\otimes _{{ RH}} { RL}$. 
 \par  
 By \cite[Lemma 3.1]{Tush2022-1} we may chose the subgroup $H$ such that for any $L$ invariant subgroup $I^{\dag } \le K$ of finite index in $K$ we have $ I^{\dag } H=L$ and
                  $ \chi  =\{ I^{\dag }  X\, \mid \,|X\in \chi _{{ H}}  \}$,                            
  where $$\chi _{H} { =\{ <K}_{{ 1}} { ,\{ x}_{{ j}} { \} }_{{ j}\in { J}} { >|J}\subseteq { \{ 1,..., n\} \} }$$ is a full system of subgroups of $H$ over $K_{1} =K\bigcap H$.    
 \par  
 Let $X\in \chi _{H} (aRH)$ if $I^{\dag } { X}\in \chi \backslash \chi (aRL)$ then it follows from Lemma \ref{L2.1}(i) that the module $aRL$ is $R(I^{\dag } { X)}/PR(I^{\dag } { X)}$-torsion and it follows from  \cite[Lemma 2.2.3(ii)]{Tush2000} that the module $aRL$ is $R{ X}/(P\bigcap R(D\bigcap H))R{ X}$-torsion, because $X$ is a dense subgroup of $I^{\dag } { X}$. Therefore, $X\notin \chi _{H} (aRH)$ and a contradiction is obtained. 
 Suppose now that $I^{\dag } { X}\in \chi (aRL)$, where $X\in \chi _{H} $ then it follows from Lemma \ref{L2.1}(ii) that  the module $aRL$ is $R(I^{\dag } { X)}/PR(I^{\dag } { X)}$-torsion-free. Therefore, $aRH$ is $R{ X}/(P\bigcap R(D\bigcap H))R{ X}$-torsion-free and hence $X\in \chi _{H} (aRH)$. Thus, we have the following  relation:  
   \begin{equation}\label{2.1}
                      \chi (aRL)=\{ I^{\dag } { X}\, \mid \, |X\in \chi _{H} (aRH)\}.                
 \end{equation}
 \par
 According to Lemma \ref{L2.3}(iii), we can choose the element $a$ such that there exists a right ideal $J\geq P$ of $R(K \cap H )$  such that if $I_{1} $ is an $H$-grand ideal of $R(K \cap H )$ which culls $J$ in $R(K \cap H )$ then for any cyclic $RH$-submodule ${ 0 }\ne {b}RH\le {a}RH$ we have $X\in \chi _{H} (bRH)$ if and only if the quotient module $(bRH)_{I_1} $ is not $kA_{X} $-torsion, where $A_{X} =A\bigcap X/I_{1}^{\dag } $ and $A$ is a central subgroup of finite index in $H/I_{1}^{\dag } $. 
 \par  
 Suppose that there exists  an $L$-grand ideal $I$ of $RK$ such that $I_{1} =I\bigcap R(K \cap H )$ culls $J$ in $R(K \cap H ) $. Since  $I^{\dag } H=L$, it follows from   \cite[Lemma 2.2.6]{Tush2000}   that $(aRL)_{I}$ and $(aRH)_{I_1} $ are isomorphic as $kA$-modules, where $A$ is a central subgroup of finite index in $L/I^{\dag } $. Then it follows from Lemma \ref{L2.3}(iii) and   (\ref{2.1}) that $X\in \chi (aRL)$ if and only if the quotient module $(aRL)_{I}$ is not $kA_{X}$-torsion, where $A_{X} =A\bigcap X/I^{\dag } $ and $A$  is a central subgroup of finite index in $L/I^{\dag } $. 
 \par  
  Let $0\ne b\in aRL$, it follows from Lemma \ref{L2.1}(iii) that   there is an element $0\ne c\in bRL\bigcap aRH$. Therefore,  we have a chain of submodules $cRH\otimes _{RH} RL=cRL\le bRL\le aRL=aRH\otimes _{RH} RL$. It follows from Lemma \ref{L2.1}(ii) that  $\chi (cRL)=\chi (bRL)=\chi (aRL)$. Since $cRL=cRH\otimes _{RH} RL$, the arguments applied in the case $aRL$ show that $X\in \chi (cRL)=\chi (bRL)=\chi (aRL)$  if and only if $(cRL)_{I}$ is not $kA_{X} $-torsion. Let $X\in \chi (cRL)=\chi (bRL)=\chi (aRL)$ then $(cRL)_{I}$ is not $kA_{X} $-torsion.  Therefore, it follows from Lemma \ref{L2.2} that $(bRL)_{I}$ is not $kA_{X} $-torsion because $cRL\le bRL$. If $(bRL)_{I}$ is not $kA_{X} $-torsion then it follows from Lemma \ref{L2.2} that $(aRL)_{I}$ is not $kA_{X} $-torsion because $bRL\le aRL$ and it means that  $X\in \chi (cRL)=\chi (bRL)=\chi (aRL)$.
 \par  
{\bf Step 2.} By step 1, the element $0\ne a\in W$ may be chosen  such that the assertion of lemma holds for any element $0\ne b\in aRL$. Besides, $aRN$ satisfies the conditions (i)-(iii) of Lemma \ref{L2.1} and the conditions (ii), (iii) of Lemma \ref{L2.2} and these conditions hold for any cyclic submodule ${ 0 }\ne {  bRN}\le {  aRN}$. 
 \par  
Let $0\ne b\in aRN$ and let $X\in \chi (bRL)$. Suppose that the quotient module $(bRL)_{I}$ is $kA_{X} $-torsion. By Lemma \ref{L2.2}(iii), there exists an $RL$-submodule $0\ne cRL\le aRL$ such that $(cRL)_{I}$ has a finite series each of whose quotient is isomorphic to some section of $(bRL)_{I}$ considered as $kA$-modules. Then the quotient module $(cRL)_{I}$ is $kA_{X} $-torsion. By Lemma \ref{L2.1}(ii), $X\in \chi (cRL)$ and by, step 1, the quotient module $(cRL)_{I}$ is not $kA_{X} $-torsion. So, we have a contradiction and hence the quotient module $(bRL)_{I}$ is not $kA_{X} $-torsion. 
 \par  
 Suppose now that the quotient module $(bRL)_{I}$ is not $kA_{X} $-torsion. By Lemma \ref{L2.2}(ii), $(bRL)_{I}$ has a finite series each of whose quotient is isomorphic to some section of $(aRL)_{I}$ considered as $kA$-modules and hence the quotient module $(aRL)_{I}$ is not $kA_{X} $-torsion. Then, by step 1, $X\in \chi (aRL)$ and, by Lemma \ref{L2.1}(ii),  $X\in \chi (bRL)$
 \end{proof}


\section{A set of commutative invariants for modules over crossed products of torsion-free minimax nilpotent groups}

Let $S$ be a commutative Noetherian ring and let $I$ be an ideal of $S$. Let ${\mu _S}(I)$ be the set of prime ideals of $S$ minimal over $I$, by \cite[Chap. II, \S 4, Corollary 3]{Bour}, the set ${\mu_S}(I)$ is finite. 
\par  
Let $P$ be a prime ideal of  $S$ and let ${S_P}$ be the localization  of $S$ at the ideal $P$. Let $M$ be an $S$-module, the support $Sup{p_S}M$ of the module $M$ consists of all prime ideals $P$ of $S$ such that ${M_P} = M{ \otimes _S}{S_P} \ne 0$ (see \cite[Chap. II, \S 4]{Bour}). By \cite[Chap. IV, \S 1, Theorem 2]{Bour}, if $S$ and $M$ are Noetherian then the set ${\mu_S}(M)$  of minimal elements of $Sup{p_S}M$ coincides with ${\mu_S}(Ann_S(M))$, where ${\mu_S}(Ann_S (M))$ is the set of prime ideals of $S$ which are minimal over $An{n_S}M$. Thus, we have
\begin{equation}
{\mu_S}(M) = {\mu_S}(Ann_{S}(M)).      \label{3.1} 
\end{equation}

	Let $K$ be a normal subgroup of a group $G$. Let $L$ be a   subgroup of $G$ such that $K \leqslant L \leqslant G$ and the   quotient group $L/K$ is finitely generated free abelian. Let $R$ be
	 a commutative domain, $I$ be a $G$-grand ideal of the group ring $RK$ and $k=R/(R \cap I)$. Then the quotient group $K/I^\dag$ is finite and, as the quotient group $L/K$ is finitely generated free abelian, it follows from \cite[Lemma 2.2(i)]{Tush2022-1} that
	 $L/I^\dag$  has a characteristic central torsion-free subgroup $A$ of finite index. Since the quotient group $L/K$ is finitely generated free abelian, the subgroup $A$ is finitely generated free abelian. By \cite[Proposition 1.6]{Pass89}, the group ring $kA$ is a Noetherian. Let $W$ be a finitely generated $RL$-module then $(W)_I$ is a finitely generated $kA$ -module and hence $(W)_I$ is a Noetherian $kA$-module. So, the finite set ${\mu _{kA}}((W)_I) = {\mu _{kA}}(An{n_{kA}}((W)_I))$ is defined. 
\par


\begin{lemma}\label{L3.1}
Let $L$ be a minimax torsion-free nilpotent group which has a 
series $D \leq K$ of  normal subgroups such that the subgroup $D$ is abelian, the quotient group $L/D$ is torsion-free and the quotient group $L/K$ is free abelian. Let $T$ be a subgroup of finite index in  $L$ such that $K \leqslant T$. Let $R$ be a finitely generated commutative domain and $P$ be an $L$-invariant faithful prime ideal of the group ring $RD$ such that $char\, (RD/P)\notin Sp(N)$.  Let $W$ be a cyclic $RL$-module which is annihilated by the ideal  $P$ and which is $RK/PRK$-torsion-free. Let $I$ be an $L$-grand ideal of $RK$ such that $P\leq I$ and $W \ne WI$. Let $A$ be a central torsion-free subgroup of finite index in $L/{I^\dag }$ , $B$ be a subgroup of finite index in $A \cap (T/{I^\dag })$ and $k= R/(R \cap I)$.  Then:
 \begin{description}
\item[(i)] $Sup{p_{kB}}  {(W_1)_I}  \subseteq Sup{p_{kB}}  (W)_I$  for any cyclic $RT$-submodule  ${W_1} \leqslant W$; 
\item[(ii)] there exists a cyclic $RL$-submodule $0 \ne V \leqslant W$ such that  ${\mu _{kB}}((V_1)_I) = {\mu _{kB}}((V)_I)$  for any cyclic $RL$-submodule  $0 \ne {V_1} \leqslant V$ and any  subgroup $B\leq A$ of finite index in $A$. 
\end{description}
\end{lemma}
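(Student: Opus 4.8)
The plan is to derive (i) from an Artin--Rees analysis of the $I$-adic filtration of $W$, and (ii) from a minimal-support choice of $V$ together with (i).

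\emph{Part (i).} Since $[L:T]<\infty$ and $W=wRL$ is cyclic, a transversal of $L/T$ exhibits $W$ as a finitely generated right $RT$-module. By \cite[Lemma 2.4.1(i)]{Tush2000} the $L$-grand ideal $I$ is polycentral in $RK$, and its polycentral series can be taken to consist of $L$-invariant ideals; hence $IRT$ is polycentral in $RT$ and, by \cite[Theorem 6.12]{Wehr09}, has the weak Artin--Rees property for finitely generated $RT$-modules. Applying this to $W_1\le W$ yields $n\in\mathbb{N}$ with $WI^n\cap W_1\le W_1I$, so $(W_1)_I=W_1/W_1I$ is an epimorphic image of $W_1/(WI^n\cap W_1)\cong(W_1+WI^n)/WI^n\le(W)_{I^n}$; therefore $Supp_{kB}((W_1)_I)\subseteq Supp_{kB}((W)_{I^n})$. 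It then suffices to observe that $Supp_{kB}((W)_{I^n})=Supp_{kB}((W)_I)$: the filtration $W\ge WI\ge\cdots\ge WI^n$ gives $Supp_{kB}((W)_{I^n})=\bigcup_{j<n}Supp_{kB}(WI^j/WI^{j+1})$, and for each $j$ the module $WI^j/WI^{j+1}$ is an epimorphic image of $(W/WI)\otimes_{RK/I}(I^j/I^{j+1})$, hence of a direct sum of copies of $(W)_I$, so $Supp_{kB}(WI^j/WI^{j+1})\subseteq Supp_{kB}((W)_I)$; the reverse inclusion is immediate from the surjection $(W)_{I^n}\twoheadrightarrow(W)_I$.

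\emph{Part (ii).} First I reduce to $B=A$. If $B\le A$ has finite index, then $kB\subseteq kA$ is module-finite and induces a finite, hence closed, map $f\colon\mathrm{Spec}\,kA\to\mathrm{Spec}\,kB$; for any finitely generated $kA$-module $M$ one has $Supp_{kB}(M)=f(Supp_{kA}(M))$ by lying-over, whence the minimal elements of $Supp_{kB}(M)$ lie among $\{\,\mathfrak{p}\cap kB:\mathfrak{p}\in\mu_{kA}(M)\,\}$; thus an equality $\mu_{kA}(M)=\mu_{kA}(M')$ forces $\mu_{kB}(M)=\mu_{kB}(M')$. So it is enough to produce a cyclic $0\ne V\le W$ with $\mu_{kA}((V_1)_I)=\mu_{kA}((V)_I)$ for every cyclic $0\ne V_1\le V$. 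If $\bigcap_nWI^n\ne 0$, choose $0\ne v$ in this intersection and put $V=vRL$; applying the weak Artin--Rees property to the $RT$-module $W$ with submodule $vRT$ gives $v\in(vRT)I\subseteq(vRL)I$, so $vRL$, and hence every cyclic submodule of it, is $I$-divisible, all the relevant sets $\mu_{kB}(\cdot)$ are empty, and the claim holds. Otherwise $\bigcap_nWI^n=0$; then any cyclic $0\ne V_1\le W$ satisfies $V_1I\ne V_1$ (else $V_1\subseteq\bigcap_nWI^n=0$), so $Supp_{kA}((V_1)_I)$ is a nonempty closed subset of the Noetherian space $\mathrm{Spec}\,kA$. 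Pick a cyclic $0\ne V\le W$ with $Supp_{kA}((V)_I)$ minimal. For any cyclic $0\ne V_1\le V$ we have $VI\ne V$, so part (i) applies with $V$ in place of $W$ and $T=L$, giving $Supp_{kA}((V_1)_I)\subseteq Supp_{kA}((V)_I)$; minimality forces equality, hence $\mu_{kA}((V_1)_I)=\mu_{kA}((V)_I)$, and by the reduction $\mu_{kB}((V_1)_I)=\mu_{kB}((V)_I)$ for every finite-index $B\le A$.

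The main obstacle is that $RK$ and $RT$ are not Noetherian, so the Artin--Rees arguments have to be run through the polycentrality of $I$ (with an $L$-invariant central series); this is also what controls the $I$-divisible submodules via $\bigcap_nWI^n$ and guarantees that $(V_1)_I\ne 0$ for the chosen $V$. Once that is in place, the passage to $Supp((W)_{I^n})$, the reduction to $B=A$, and the minimal-support selection of $V$ are all routine.
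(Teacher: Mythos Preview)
Your argument for (ii) is correct and parallels the paper's approach: the paper simply cites \cite[Lemma 4.2]{Tush2022-1}, whose proof is a minimal-support descending-chain argument of the same kind as yours, together with \cite[Lemma 4.1(iv)]{Tush2022-1} for the passage from $A$ to finite-index $B$.

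Part (i) has a real gap. Both results you invoke are Noetherian-ring facts, and here $L$ is merely minimax, so $RK$ and $RT$ are not Noetherian. First, \cite[Lemma 2.4.1(i)]{Tush2000} does not assert that $I$ is polycentral in $RK$ or that $IRT$ is polycentral in $RT$: what it gives is polycentrality of $(RF\cap I)RC$ in $RC$ where $C$ is the \emph{centraliser} of the finite section $F/I^{\dag}$, and there is no reason $T$ should centralise $F/I^{\dag}$. Second, and decisively, \cite[Theorem 6.12]{Wehr09} is a statement about group rings of polycyclic groups, and weak Artin--Rees genuinely fails for central principal ideals outside the Noetherian setting: in the domain $R=k[x,y_1,y_2,\dots]/(xy_{i+1}-y_i:i\ge 1)$ with $I=(x)$, one has $y_1=x^ny_{n+1}\in y_1R\cap RI^n$ for every $n$, yet $y_1\notin y_1xR$. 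Your closing paragraph acknowledges the non-Noetherian obstacle but does not remove it; polycentrality by itself does not suffice.

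The paper's route for (i) is different and avoids this issue by first descending to a Noetherian situation. Using Proposition~\ref{P2.1} one writes $aRL=aRH\otimes_{RH}RL$ for a finitely generated dense subgroup $H\le L$, chosen large enough that $b\in aRH$ and $L=I^{\dag}H$; then \cite[Lemma 2.2.6]{Tush2000} yields $kB$-module isomorphisms $(aRL)_I\cong(aRH)_{I'}$ and $(bRT)_I\cong(bR(H\cap T))_{I'}$ with $I'=I\cap R(K\cap H)$, reducing (i) to the polycyclic case already handled in \cite[Lemma 5(i)]{Tush2002}. If you want to salvage your direct Artin--Rees approach, you would need exactly this kind of reduction to a polycyclic $H$ before the AR step can be justified.
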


     \begin{proof} (i). Since $\left| {L:T} \right| < \infty $, we see that $aRL$ is a finitely generated $RT$-module. If the group $L$ is finitely generated, the assertion is proved in \cite[Lemma 5(i)]{Tush2002}. 
\par  
     Consider now the general case. Suppose that $W = aRL$ and ${W_1} = bRT$, where $b \in aRL$. By Proposition \ref{P2.1}, there is a finitely generated subgroup $H \leqslant L$ such that $W = aRH{ \otimes _{RH}}RL$. Evidently, taking the subgroup $H$ bigger if it is necessary, we can assume that $b \in aRH$ and $L = {I^\dag }(K \cap H)$. Therefore, ${W_1} = bR(H \cap T){ \otimes _{R(H \cap T)}}RT$ and, as $L = {I^\dag }(K \cap H)$, we have $L = {I^\dag }H$ and $T = {I^\dag }(H \cap T)$. Then it follows from   \cite[Lemma 2.2.6]{Tush2000}   that there are $RH$-module isomorphism $(aRL)_{I} \simeq (aRH)_{I' }$ and $R(H \cap T)$-module isomorphism  $(bRT)_I \simeq (bR(H \cap T))_{I'}$, where $I' = RH \cap I$ and these isomorphisms induce $kB$-module isomorphisms. So, the assertion follows from the considered above case, where the group $L$ is finitely generated. 
\par  
(ii) To prove the assertion we can repeat the arguments of the proof of  \cite[Lemma 4.2]{Tush2022-1} applying (i) instead of  \cite[Lemma 4.2(i)]{Tush2022-1}.
\end{proof}


\begin{lemma}\label{L3.2}
Let $N$ be a minimax torsion-free nilpotent group which has a 
series $D \leq K\leq L$ of  normal subgroups such that the subgroup $D$ is isolated abelian, the quotient group $N/K$ is torsion-free abelian and  the  the quotient group $L/K$ is free abelian. Let $R$ be a finitely generated commutative domain and $P$ be an $N$-invariant faithful prime ideal of the group ring $RD$ such that $char\, (RD/P)\notin Sp(N)$.  Let $W$ be an $RN$-module which is annihilated by the ideal  $P$ and which is $RK/PRK$-torsion-free. Let $I$ be an $N$-grand ideal of $RK$ such that $P\leq I$ and $W \ne WI$. Let $A$ be a central torsion-free subgroup of finite index in $L/{I^\dag }$ and $k= R/(R \cap I)$. Then  there exists a cyclic $RN$-submodule $0 \ne aRN \leqslant W$ such that ${\mu _{kB}}((bRL)_{I}) = {\mu _{kB}}((aRL)_{I})$ for any element $0 \ne b \in aRN$ and any  subgroup $B\leq A$ of finite index in $A$.  
\end{lemma}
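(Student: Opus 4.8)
The plan is to combine two facts already available. Lemma~\ref{L2.2} says that for a suitably chosen cyclic $RN$-module $aRN$ and any $0\ne b\in aRN$, each of the $kA$-modules $(bRL)_I$ and $(aRL)_I$ is, up to a finite filtration, built from sections of the other (its parts (ii) and (iii)); Lemma~\ref{L3.1}(ii) says that inside a cyclic $RL$-module one can find a cyclic $RL$-submodule that is ``$\mu_{kB}$-critical'', i.e.\ all of whose nonzero cyclic $RL$-submodules give the same set $\mu_{kB}$ of their $I$-quotient, simultaneously for every finite-index $B\le A$. So I would produce a single cyclic $RL$-submodule $aRL\le W$ that both lies inside the submodule furnished by Lemma~\ref{L2.2} (so that its conclusions are available for $a$ and for every $0\ne b\in aRN$) and is $\mu_{kB}$-critical, and then translate the ``finite filtration with section factors'' statements into equalities of supports, whence equality of the sets $\mu_{kB}$ follows at once.

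First I would apply Lemma~\ref{L2.2} to $W$, obtaining a nonzero $RL$-submodule $V_0\le W$ for which (i)--(iii) hold for every $0\ne a\in V_0$ and every $0\ne b\in aRN$; this $V_0$ is again annihilated by $P$ and $RK/PRK$-torsion-free, and $I$ is again $L$-grand, since $N$-invariance of $P$ and of the subgroup $F$ from the definition of $N$-grandness forces $L$-invariance (recall $L\le N$). If every nonzero cyclic $RL$-submodule $aRL\le V_0$ had $(aRL)_I=0$ there would be nothing to prove, for then $(bRL)_I=0$ for every such $a$ and every $0\ne b\in aRN$ by Lemma~\ref{L2.2}(ii), and all the sets $\mu_{kB}$ are empty. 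Otherwise I fix $0\ne a_0\in V_0$ with $a_0RL\ne a_0RLI$ and apply Lemma~\ref{L3.1}(ii) to the cyclic $RL$-module $W_0=a_0RL$ (which, being a submodule of $V_0$, is annihilated by $P$ and $RK/PRK$-torsion-free), with ``$L$'', ``$K$'', ``$D$'' as here and ``$T$'' taken to be $L$; the remaining hypotheses hold because $L$ is minimax torsion-free nilpotent, $N/D$ and hence $L/D$ is torsion-free, $L/K\le N/K$ is free abelian of finite rank and so finitely generated, $P$ is $L$-invariant, $I$ is $L$-grand with $P\le I$, $W_0\ne W_0I$, and $Sp(L)\subseteq Sp(N)$. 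This yields a cyclic $RL$-submodule $0\ne aRL\le W_0\le V_0$ with $\mu_{kB}((V_1)_I)=\mu_{kB}((aRL)_I)$ for every nonzero cyclic $RL$-submodule $V_1\le aRL$ and every finite-index $B\le A$; since $a\in V_0$, Lemma~\ref{L2.2}(i)--(iii) apply to $a$ and to every $0\ne b\in aRN$.

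It then remains to check that $aRN$ works. Fix $0\ne b\in aRN$ and a finite-index subgroup $B\le A$. Since $L/K$ has finite rank, $L/I^{\dag}$, $A$ and $B$ are finitely generated, so $kB$ is Noetherian and $(aRL)_I$, $(bRL)_I$ and every $(cRL)_I$ for a cyclic $RL$-submodule $cRL$ are finitely generated $kB$-modules; by~\eqref{3.1} and the remarks preceding it, two such modules have the same $\mu_{kB}$ exactly when they have the same support over $kB$. By Lemma~\ref{L2.2}(ii), $(bRL)_I$ has a finite filtration whose factors are sections of $(aRL)_I$, so $Supp_{kB}((bRL)_I)\subseteq Supp_{kB}((aRL)_I)$; by Lemma~\ref{L2.2}(iii) there is a cyclic $RL$-submodule $0\ne cRL\le aRL$ with $(cRL)_I$ filtered by sections of $(bRL)_I$, so $Supp_{kB}((cRL)_I)\subseteq Supp_{kB}((bRL)_I)$; and, $cRL$ being a cyclic $RL$-submodule of $aRL$, the criticality from Lemma~\ref{L3.1}(ii) gives $Supp_{kB}((cRL)_I)=Supp_{kB}((aRL)_I)$. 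Chaining these, $Supp_{kB}((aRL)_I)\subseteq Supp_{kB}((bRL)_I)\subseteq Supp_{kB}((aRL)_I)$, so all three supports coincide and $\mu_{kB}((bRL)_I)=\mu_{kB}((aRL)_I)$. I expect the real difficulty to be organizational rather than computational: arranging the one submodule $aRL$ that satisfies Lemma~\ref{L2.2}'s conclusions and Lemma~\ref{L3.1}(ii)'s criticality at the same time, and checking that all the $N$-level hypotheses (grandness of $I$, invariance and primeness of $P$, the restriction on $char(RD/P)$, finite generation of $A$) descend to $L$ so that Lemma~\ref{L3.1} is legitimately applicable; the support bookkeeping itself is straightforward.
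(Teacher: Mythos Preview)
Your approach is the paper's: first pass to the $RL$-submodule furnished by Lemma~\ref{L2.2}, then inside it pick the $\mu_{kB}$-critical cyclic $RL$-submodule from Lemma~\ref{L3.1}(ii), and finish by the support chain $Supp_{kB}((cRL)_I)\subseteq Supp_{kB}((bRL)_I)\subseteq Supp_{kB}((aRL)_I)$ coming from Lemma~\ref{L2.2}(ii),(iii). (The paper first reduces to $B=A$ via an external lemma, whereas you carry a general finite-index $B$ throughout; since Lemma~\ref{L3.1}(ii) already delivers criticality for every such $B$, your direct route is fine.)

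There is one genuine slip in your last paragraph. Lemma~\ref{L3.1}(ii) gives $\mu_{kB}((cRL)_I)=\mu_{kB}((aRL)_I)$, not $Supp_{kB}((cRL)_I)=Supp_{kB}((aRL)_I)$; the supports need not coincide, so your sentence ``all three supports coincide'' is unjustified. The paper closes the argument correctly as follows, and you should do the same: if $P\in\mu_{kB}((aRL)_I)=\mu_{kB}((cRL)_I)$ then $P\in Supp_{kB}((cRL)_I)\subseteq Supp_{kB}((bRL)_I)$, and since $Supp_{kB}((bRL)_I)\subseteq Supp_{kB}((aRL)_I)$ and $P$ is minimal in the latter, $P\in\mu_{kB}((bRL)_I)$; conversely any $Q\in\mu_{kB}((bRL)_I)$ lies in $Supp_{kB}((aRL)_I)$, hence contains some $P\in\mu_{kB}((aRL)_I)\subseteq Supp_{kB}((bRL)_I)$, and minimality of $Q$ forces $Q=P$. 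This yields $\mu_{kB}((bRL)_I)=\mu_{kB}((aRL)_I)$ without ever asserting equality of supports.
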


    \begin{proof} It follows from \cite[Lemma 4.1(iv)]{Tush2022-1}that it is sufficient to show that  there exists a cyclic $RN$-submodule $0 \ne U \leqslant W$ such that ${\mu _{kA}}((bRL)_{I}) $ for any element $0 \ne b \in U$.
\par      
      By Lemma \ref{L2.2}(ii, iii), there exists a cyclic $RL$-submodule $0 \ne V \leqslant W$ such that for any  element $0 \ne a \in V$ and any element $0 \ne b \in aRN$ the $kA$-module $(bRL)_{I}$ has a finite series each of whose quotient is isomorphic to some section of the $kA$-module $(aRL)_{I}$. Moreover, by Lemma \ref{L2.2}(iii), there exists an $RL$-submodule $0 \ne cRL \leqslant aRL$ such that the $kA$-module $(cRL)_{I}$ has a finite series each of whose quotient is isomorphic to some section of the $kA$-module $(bRL)_{I}$. Then it follows from \cite[Ch. II, \S 4, Proposition 16]{Bour} that 
\begin{equation}
         Sup{p_{kA}}(cRL)_{I} \subseteq  Supp_{kA}(bRL)_{I} \subseteq Sup{p_{kA}}(aRL)_{I}.      \label{3.0}
\end{equation}

Accordin to Lemma \ref{L3.1}(ii), we also can choose the element $a \in V$ such that ${\mu _{kA}}((V_1)_I) = {\mu _{kA}}((aRL)_{I})$  for any cyclic $RL$-submodule  $0 \ne {V_1} \leqslant aRL$ and hence we can assume that
\begin{equation}
          {\mu _{kA}}((cRL)_{I}) = {\mu _{kA}}((aRL)_{I})      \label{3.2}
\end{equation}
If $P \in {\mu _{kA}}(aRL)_I$  then it follows from (\ref{3.2}) and (\ref{3.0})  that $P \in Supp_{kA}(bRL)_{I}$ and the second embedding of (\ref{3.0}) shows that $P \in {\mu _{kA}}(bRL)_{I}$. Thus, ${\mu _{kA}}(aRL)_I \subseteq {\mu _{kA}}((bRL)_{I})$. Then it follows from  the second embedding of (\ref{3.0})
that ${\mu _{kA}}(aRL)_I = {\mu _{kA}}((bRL)_{I})$.
\end{proof}

	 Let $A$ be a torsion-free abelian group of finite rank and let $k$ be a field. If $P$ and $Q$ are ideals of $kA$ then we write $P \approx Q$ if  $P \cap kB = Q \cap kB$ for some finitely generated dense subgroup $B \leqslant A$. Then $ \approx $ is an equivalence relation on the set of all prime ideals of $kA$ and we denote by $\left[P \right]$ the class of equivalence containing an ideal $P$. If a group $G$ acts on $A$ then we obtain an action of $G$ on the set of equivalent prime ideals of $kA$ which is given by ${\left[ P \right]^g} = \left[ {{P^g}} \right]$. 
\par  
	 If $B$ is a dense subgroup of $A$ and $P$ is a prime ideal of $kB$ then, as $kA$ is an integer domain over $kB$, it follows from \cite[Chap. V, \S 2, Theorem 1]{Bour} that there is a prime ideal $Q$ of $kA$ such  that $Q \cap kB = P$ and we put  ${[P]_{kA}} = [Q]$. If $\mu $ is a set of prime ideals of $kB$ then we put ${[\mu ]_{kA}} = \{ {[P]_{kA}}|P \in \mu \} $. 
\par
	Let $G$ be a group and let $K$ be a normal subgroup of $G$. Let $N$ be a nilpotent subgroup of $G$ such that $K \leqslant N \leqslant G$ and the quotient group $N/K$ is torsion-free abelian of finite rank. Let $R$ be a commutative domain and let $I$ be a $G$-grand ideal of $RK$. Then, as the quotient group $K/I^\dag$ is finite and the quotient group $N/K$ is torsion-free abelian of finite rank, it follows from \cite[Lemma 2.2(i)]{Tush2022-1} that $N/I^\dag$ has a characteristic central torsion-free subgroup $A$ of finite index. 
\par  
Let $L$ be a dense subgroups of $N$ such that $K \leqslant L$ and the quotient groups $L/K$ is finitely generated. Then $A \cap L/I^\dag$ is a dense central finitely generated torsion-free subgroup of $A$. 
\par  
Let $W$ be an $RN$-module  and $aRL$ be a cyclic $RL$-module generated by an element $0 \ne a \in W$. By \cite[Lemma 4.4(i)]{Tush2022-1}, there is a finitely generated dense subgroup $A_L \leqslant A \cap L/I^\dag$ 
such that for any subgroup $X$ of finite index in ${A_L}$ the mapping ${\mu _{k{A_L}}}(An{n_{k{A_L}}}((aRL)_{I})) \to {\mu _{kX}}(An{n_{kX}}((aRL)_{I}))$ given by $P \mapsto P \cap kX$ is bijective , where ${\mu _{k{A_L}}}(An{n_{k{A_L}}}((aRL)_{I}))$ is the set of minimal primes over $An{n_{k{A_L}}}((aRL)_{I})$ and ${\mu _{kX}}(An{n_{kX}}((aRL)_{I}))$ is the set of minimal primes over $An{n_{kX}}((aRL)_{I})$. 
\par  
As ${A_L}$ is a finitely generated  subgroup of finite index in $A \cap L / I^\dag$, we can conclude that ${A_L}$ is a dense finitely generated  subgroup of $A$ and $(aRL)_{I}$ is a finitely generated $k{A_L}$-module. Then it follows from \cite[Proposition 1.6]{Pass89}  that the domain $k{A_L}$ is Noetherian and $(aRL)_{I}$ is a Noetherian $k{A_L}$-module. Thus, the sets $Sup{p_{k{A_L}}}((aRL)_{I})$ and ${\mu _{k{A_L}}}((aRL)_{I})$ are well defined. Then, according to (\ref{3.1}), we have  
\begin{equation}\label{3.4}
                         {\mu _{k{A_L}}}((aRL)_{I}) = {\mu _{k{A_L}}}(An{n_{k{A_L}}} ((aRL)_{I}))                  
\end{equation}
Thus, the set ${\mu _{k{A_L}}}((aRL)_{I})$ is defined for any $0 \ne a \in W$ and we put ${[{\mu _{k{A_L}}}((aRL)_{I})]_{kA}} = \left\{ {{{[P]}_{kA}}|P \in {\mu _{k{A_L}}}((aRL)_{I})} \right\}$. Then, by (\ref{3.4}),   
\begin{equation}\label{3.5}
[\mu _{kA_L}((aRL)_{I})]_{kA} = [\mu _{kA_L}(Ann_{kA_L}((aRL)_{I}))]_{kA}.
\end{equation}
 So, according to \cite[Lemma 4.4(ii),(iii)]{Tush2022-1}, the set 
$${[{\mu _{k{A_L}}}((aRL)_{I})]_{kA}} = \left\{ {{{[P]}_{kA}}\, |\,  P \in {\mu _{k{A_L}}}(An{n_{k{A_L}}}({(aRL)_{I}}))} \right\}$$ 
is finite and does not depend on the choice of the subgroup ${A_L}$ which meets the conditions of \cite[Lemma 4.4(i)]{Tush2022-1}. 
Everywhere below in the definition of the set ${[{\mu _{k{A_L}}}((aRL)_{I})]_{kA}}$ we assume that the subgroup ${A_L}$ meets the conditions of \cite[Lemma 4.4(i)]{Tush2022-1}.


\begin{proposition}\label{P3.1}
Let $N$ be a minimax torsion-free nilpotent group which has a 
series $D \leq K$ of  normal subgroups such that the subgroup $D$ is isolated abelian and the quotient group $N/K$ is torsion-free abelian. Let $R$ be a finitely generated commutative domain and $P$ be an $N$-invariant faithful prime ideal of the group ring $RD$ such that $char\, (RD/P)\notin Sp(N)$.  Let $W$ be an $RN$-module which is annihilated by the ideal  $P$ and which is $RK/PRK$-torsion-free. Let $I$ be an $N$-grand ideal of $RK$ such that $P\leq I$ and $W \ne WI$. Let $A$ be a central torsion-free subgroup of finite index in $L/{I^\dag }$ and $k= R/(R \cap I)$. Then there exists a cyclic $RN$-submodule $0 \ne V \leqslant W$ such that ${[{\mu _{k{A_L}}}((aRL)_{I})]_{kA}} = {[{\mu _{k{A_M}}}((bRM)_I)]_{kA}}$ for any elements $0 \ne a,b \in V$ and any dense subgroups $L,M \leqslant N$ such that $K \leqslant L \cap M$ and the quotient groups $L/K$ and $M/K$ are finitely generated. 
\end{proposition}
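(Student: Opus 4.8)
The plan is to split the claim into two special cases and patch them together. \textbf{Case (I)} (independence of the element): there is one fixed dense subgroup $L_{0}\le N$ with $K\le L_{0}$ and $L_{0}/K$ finitely generated such that ${[{\mu _{k{A_{L_{0}}}}}((aRL_{0})_{I})]_{kA}}$ is the same for all $0\ne a\in V$. \textbf{Case (II)} (independence of the index): for every dense $L\le N$ with $K\le L$, $L/K$ finitely generated, and every $M$ of finite index in $L$ with $K\le M$, one has ${[{\mu _{k{A_{L}}}}((aRL)_{I})]_{kA}}={[{\mu _{k{A_{M}}}}((aRM)_{I})]_{kA}}$ for all $0\ne a\in V$. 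These suffice: if $L,M\le N$ are arbitrary dense subgroups with $K\le L\cap M$ and $L/K,M/K$ finitely generated, then $N/(L\cap M)$ embeds in $N/L\times N/M$ and so is torsion, and $(L\cap M)/K$ is a subgroup of the finitely generated abelian group $L/K$; hence $L\cap M$ is again dense in $N$ with $(L\cap M)/K$ finitely generated, and, as $L/K$, $M/K$, $(L\cap M)/K$ all have torsion-free rank $r_{0}(N/K)$, $L\cap M$ is of finite index in both $L$ and $M$. Routing through $L_{0}$, Case (II) gives ${[{\mu _{k{A_{L}}}}((aRL)_{I})]_{kA}}={[{\mu _{k{A_{L\cap L_{0}}}}}((aR(L\cap L_{0}))_{I})]_{kA}}={[{\mu _{k{A_{L_{0}}}}}((aRL_{0})_{I})]_{kA}}$, Case (I) gives $={[{\mu _{k{A_{L_{0}}}}}((bRL_{0})_{I})]_{kA}}$, and Case (II) again gives $={[{\mu _{k{A_{M\cap L_{0}}}}}((bR(M\cap L_{0}))_{I})]_{kA}}={[{\mu _{k{A_{M}}}}((bRM)_{I})]_{kA}}$. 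As usual, $V$ is built by replacing $W$ by finitely many successive nonzero $RN$-submodules, every hypothesis and every property in play surviving such a replacement; in particular I may also assume $(aRL)_{I}\ne0$ for all $0\ne a\in V$ and all admissible $L$.

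For Case (II) the point is that $N/K$ abelian forces $N'\le K$, so every subgroup of $N$ containing $K$ — in particular $L$ and $M$ — is normal in $N$. Taking right coset representatives $1=g_{1},\dots,g_{r}$ of $M$ in $L$ yields $aRL=\sum_{i}(ag_{i})RM$, and for each $i$ right multiplication by $g_{i}$ maps $aRM$ onto $(ag_{i})RM$ and intertwines the $RM$-action on the source with the $g_{i}$-conjugated action on the target; since $I$ is $N$-invariant this gives $((ag_{i})RM)_{I}\cong{}^{g_{i}}((aRM)_{I})$. Restricting to a finitely generated subgroup $B$ of $A$ with $B\le A\cap M/I^{\dag}$ — on which conjugation by $g_{i}$ acts trivially because $A$ is central in $N/I^{\dag}$ — we get $((ag_{i})RM)_{I}\cong(aRM)_{I}$ as $kB$-modules, hence $\mathrm{Supp}_{kB}((aRL)_{I})\subseteq\mathrm{Supp}_{kB}((aRM)_{I})$; the opposite inclusion is Lemma \ref{L3.1}(i) with ambient group $L$, $T=M$, $W_{1}=aRM\le aRL$. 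So $\mu_{kB}((aRL)_{I})=\mu_{kB}((aRM)_{I})$, and choosing $B$ of finite index in the subgroups attached to $(aRL)_{I}$ and $(aRM)_{I}$ by \cite[Lemma 4.4(i)]{Tush2022-1} — possible since those subgroups are finitely generated and dense in the finitely generated groups $A\cap L/I^{\dag}$, $A\cap M/I^{\dag}$, hence of finite index there, and since $A\cap M/I^{\dag}$ has finite index in $A\cap L/I^{\dag}$ — and invoking the fact, recorded before the statement, that ${[\cdot]_{kA}}$ is independent of the admissible subgroup, we obtain the required equality.

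For Case (I) I fix a dense $L_{0}\le N$ with $K\le L_{0}$ and $L_{0}/K$ finitely generated (such $L_{0}$ exists because $N/K$ is torsion-free abelian of finite rank and therefore has a finitely generated dense subgroup; note $L_{0}/K$ is then free abelian and $L_{0}/D$ is torsion-free), and apply Lemma \ref{L3.2} to the series $D\le K\le L_{0}$, the central subgroup there being $A\cap L_{0}/I^{\dag}$. This produces a cyclic $RN$-submodule $V=a_{0}RN\le W$ with $\mu_{kB}((bRL_{0})_{I})=\mu_{kB}((a_{0}RL_{0})_{I})$ for all $0\ne b\in V$ and all $B$ of finite index in $A\cap L_{0}/I^{\dag}$. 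Given $a,b\in V$, the subgroups furnished by \cite[Lemma 4.4(i)]{Tush2022-1} for $(aRL_{0})_{I}$ and $(bRL_{0})_{I}$ are of finite index in $A\cap L_{0}/I^{\dag}$, so their intersection $B$ is of finite index there and is admissible for both modules; the displayed equality of the $\mu_{kB}$'s then applies, and independence of ${[\cdot]_{kA}}$ of the admissible subgroup gives ${[{\mu _{k{A_{L_{0}}}}}((aRL_{0})_{I})]_{kA}}={[{\mu _{k{A_{L_{0}}}}}((bRL_{0})_{I})]_{kA}}$.

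The step I expect to be the main obstacle is the bookkeeping underlying the two passages to a common subgroup $B$: one has to control the tower $A\supseteq A\cap L/I^{\dag}\supseteq A_{L}$, verify that the $\approx$-classes in $kA$ attached to a fixed module computed over any two admissible subgroups coincide, and check the finite-index relations among these groups — that $A\cap L/I^{\dag}$ is finitely generated, that a finitely generated dense subgroup of it is of finite index, and that a finite-index subgroup of an admissible subgroup is again admissible — which is exactly what allows the conclusion of Lemma \ref{L3.2} (phrased for finite-index subgroups of the central subgroup) to be applied to the subgroups produced by \cite[Lemma 4.4(i)]{Tush2022-1}. Everything else is assembling lemmas already available.
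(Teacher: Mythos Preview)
Your proposal is correct and follows essentially the same approach as the paper. The paper also fixes one dense subgroup $L$ (your $L_{0}$), applies Lemma~\ref{L3.2} there to obtain $V=aRN$, and then for an arbitrary second subgroup $M$ passes through $T=L\cap M$ via exactly the support comparison you give in Case~(II): the coset decomposition $bRL=\sum bRTg_{i}$, centrality of $A_{T}$ to get $(bRTg_{i})_{I}\cong(bRT)_{I}$, Lemma~\ref{L3.1}(i) for the reverse inclusion, and \cite[Lemma~4.4]{Tush2022-1} to pass between admissible subgroups; your explicit splitting into Cases~(I) and~(II) and the routing $L\to L\cap L_{0}\to L_{0}\to M\cap L_{0}\to M$ just make transparent what the paper does implicitly.
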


\begin{proof}  Let $L$ be a dense subgroup of $N$ such that  $K \leqslant L$ and the quotient group $L/K$ is finitely generated. According to Lemma \ref{L3.2}  we can choose an element $0 \ne a \in W$ such that ${\mu _{k{B_L}}}((aRL)_{I}) = {\mu _{k{B_L}}}((bRL)_{I})$ for any element $0 \ne b \in aRN$  and any subgroup $B_L$ of finite  index in $A_L$.  
\par  
 Let $M$ be a dense subgroup of $N$ such that $K \leqslant M$ and the quotient group $M/K$ is finitely generated. It is easy to note that $T = L \cap M$ is a subgroup of finite index in $L$ and $M$ . It easily implies that  we can choose a finitely generated dense subgroup  ${A_T} \leqslant A \cap (T/{I^\dag })$ such that${A_T} \leqslant {A_L} \cap {A_M}$. Then it is sufficient to show that ${[{\mu _{k{A_L}}}((bRL)_{I})]_{kA}} = {[{\mu _{k{A_T}}}((bRT)_{I})]_{kA}} = {[{\mu _{k{A_M}}}((bRM)_{I})]_{kA}}$. It is sufficient to prove only the identity 
 ${[{\mu _{k{A_L}}}((bRL)_{I})]_{kA}} = {[{\mu _{k{A_T}}}((bRT)_{I})]_{kA}}$ because the identity ${[{\mu _{k{A_T}}}((bRT)_{I})]_{kA}} = {[{\mu _{k{A_M}}}((bRM)_{I})]_{kA}}$ is analogous. It follows from \cite[Lemma 4.4(iii)]{Tush2022-1} that ${[{\mu _{k{A_L}}}((bRL)_{I})]_{kA}} = {[{\mu _{k{A_T}}}((bRL)_{I})]_{kA}}$  and hence it is sufficient to show that  
 ${[{\mu _{k{A_T}}}((bRT)_{I})]_{kA}} = {[{\mu _{k{A_T}}}((bRL)_{I})]_{kA}}$. 
\par
Since $T \leqslant L$, we see that $bRL = \sum\limits_{i = 1}^n {bRT{g_i}} $ for some ${g_i} \in L$. Then   
$(bRL)_{I} \cong (\sum\limits_{i = 1}^n {(bRT{g_i})_I} )/X,$  where $X$ is a $k{A_T}$-submodule of $\sum\limits_{i = 1}^n (bRT{g_i})_I$. As ${A_T}$ is a central subgroup of $N/{I^\dag }$, we can conclude that $(bRT{g_i})_I \cong (bRT)_{I}$ for all ${g_i} \in L$ and hence 
$(bRL)_{I} \cong (\sum\limits_{i = 1}^n ((bRT)_{I})_i)/X.$ 
Then it follows from  \cite[Ch. II, \S 4, Proposition 16]{Bour} that 
$ Sup{p_{k{A_T}}}(bRL)_{I} \subseteq Sup{p_{k{A_T}}}(bRT)_{I}. $
\par 
On the other hand, as  $T \leqslant L$, we have $bRT \leqslant bRL$ and it  follows from Lemma \ref{L3.1}(i) that 
$Sup{p_{k{A_T}}}(bRT)_{I} \subseteq Sup{p_{k{A_T}}}(bRL)_{I}.$  
Thus, we can conclude that  $Sup{p_{k{A_T}}}(bRL)_{I} = Sup{p_{k{A_T}}}(bRT)_{I}$ and hence ${\mu _{k{A_T}}}((bRL)_{I}) = {\mu _{k{A_T}}}((bRT)_{I})$.  Therefore,  ${[{\mu _{k{A_T}}}((bRT)_{I})]_{kA}} = {[{\mu _{k{A_T}}}((bRL)_{I})]_{kA}}$. 
\end{proof}


\begin{corollary}\label{C3.1}
Let $N$ be a minimax torsion-free nilpotent group which has a 
series $D \leq K$ of  normal subgroups such that the subgroup $D$ is isolated abelian and the quotient group $N/K$ is torsion-free abelian. Let $R$ be a finitely generated commutative domain and $P$ be an $N$-invariant faithful prime ideal of the group ring $RD$ such that $char\, (RD/P)\notin Sp(N)$.  Let $W$ be an $RN$-module which is annihilated by the ideal  $P$ and which is $RK/PRK$-torsion-free. Let $I$ be an $N$-grand ideal of $RK$ such that $P\leq I$ and $W \ne WI$. Let $A$ be a central torsion-free subgroup of finite index in $L/{I^\dag }$ and $k= R/(R \cap I)$. Then there exists a cyclic $RN$-submodule $0 \ne V \leqslant W$ which defines a finite set ${M_{kA}}((V)_I) = {[{\mu _{k{A_L}}}((aRL)_{I})]_{kA}}$ of equivalent classes of prime ideals of $kA$ which depends only on the ideal $I$ and the subgroup $A$. 
\end{corollary}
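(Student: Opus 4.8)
The plan is to read the statement off from Proposition~\ref{P3.1} and the discussion preceding it; the hypotheses here coincide with those of Proposition~\ref{P3.1}, so it applies directly. First I would note that an admissible dense subgroup exists at all: since $N/K$ is torsion-free abelian of finite rank, a maximal $\mathbb{Z}$-independent subset of $N/K$ generates a finitely generated free abelian subgroup over which $N/K$ is torsion, hence a finitely generated dense subgroup $\bar L\le N/K$; its preimage $L\le N$ is then a dense subgroup of $N$ with $K\le L$ and $L/K$ finitely generated. For any such $L$ and any $0\ne a\in W$ the set $[\mu_{kA_L}((aRL)_{I})]_{kA}$ is defined, and, by the material assembled before Proposition~\ref{P3.1} (namely (\ref{3.5}) together with \cite[Lemma 4.4(i),(ii),(iii)]{Tush2022-1}), it is finite and independent of the choice of the finitely generated dense subgroup $A_L\le A\cap(L/I^{\dag})$ meeting \cite[Lemma 4.4(i)]{Tush2022-1}.

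Next I would invoke Proposition~\ref{P3.1} to obtain a cyclic $RN$-submodule $0\ne V\le W$ with
\[
[\mu_{kA_L}((aRL)_{I})]_{kA}=[\mu_{kA_M}((bRM)_{I})]_{kA}
\]
for all $0\ne a,b\in V$ and all dense subgroups $L,M\le N$ with $K\le L\cap M$ and $L/K$, $M/K$ finitely generated. Thus the finite set attached to $V$ is the same whichever generator $a\in V$ and whichever admissible dense subgroup $L$ one uses to compute it; combined with the first paragraph it is also insensitive to the choice of $A_L$. It is therefore legitimate to put $M_{kA}((V)_{I}):=[\mu_{kA_L}((aRL)_{I})]_{kA}$ for any such $a$ and $L$, and this is a well-defined finite set of equivalence classes of prime ideals of $kA$. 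Since the construction involves, apart from the fixed data $N,K,D,R,P,W$, only the choices of $a$, of $L$ and of $A_L$, all of which have just been shown to be irrelevant, the set $M_{kA}((V)_{I})$ depends only on the ideal $I$ and the subgroup $A$, as asserted. If in addition one wants the invariant to be non-trivial, one starts the whole construction inside a cyclic submodule $aRN$ with $a\notin WI$ --- which exists because $W\ne WI$ and to which all the preparatory results apply verbatim --- so that $(V)_{I}\ne 0$.

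I do not anticipate a genuine difficulty here: all the substantive work has already been done. The stabilization of $\mu$ under passage to submodules, the reduction from a pair $L,M$ of dense subgroups to their common subgroup $T=L\cap M$ of finite index, and the independence of the various finitely generated localizing subgroups are precisely the content of Lemmas~\ref{L2.1}--\ref{L3.2} and of Proposition~\ref{P3.1}. What remains is essentially bookkeeping: merging the three separate independence statements (in $a$, in $L$, in $A_L$) into the single phrase ``depends only on $I$ and $A$'', together with the minor verification, should one insist on a non-empty invariant, that $(V)_{I}\ne 0$ can be arranged as indicated above.
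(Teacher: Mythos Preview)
Your proposal is correct and follows essentially the same route as the paper's proof: finiteness via (\ref{3.5}) and \cite[Lemma 4.4(ii),(iii)]{Tush2022-1}, independence from the element $a$ and the dense subgroup $L$ via Proposition~\ref{P3.1}, and independence from the choice of $A_L$ via \cite[Lemma 4.4(iii)]{Tush2022-1}. You add two harmless extras the paper omits --- the existence of an admissible $L$ and the arrangement $(V)_I\ne 0$ --- but the core argument is identical.
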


\begin{proof} 
The finiteness of  ${M_{kA}}((V)_I) = {[{\mu _{k{A_L}}}((aRL)_{I})]_{kA}}$ follows from \cite[Lemma 4.4(ii),(iii)]{Tush2022-1} and (\ref{3.5}). By Proposition \ref{P3.1}, ${M_{kA}}((V)_I) = {[{\mu _{k{A_L}}}((aRL)_{I})]_{kA}}$ does not depend on the choice of the subgroup $L$ and the element $0 \ne a \in V$. By \cite[Lemma 4.4(iii)]{Tush2022-1}, 
${ M_{kA}}((V)_I) = {[{\mu _{k{A_L}}}((aRL)_{I})]_{kA}}$ does not depend on the choice of the subgroup ${A_L}$ which meets the conditions of \cite[Lemma 4.4(i)]{Tush2022-1}. 
\end{proof}

 Propositions \ref{P2.2}, \ref{P2.3} and Corollary \ref{C3.1} allow us to obtain the following theorem


\begin{theorem}\label{T3.1} 
Let $G$ be a soluble group of finite torsion-free rank ${r_0}(G) < \infty $ which has a series $D \leq K \leq N$ of normal subgroups such that the subgroup $N$ is torsion-free minimax, the quotient group $N/K$ is torsion-free abelian and $D=\Delta_G(N)$. Let $L$ be a dense subgroup of $N$ such that $K \leq L$ and the quotient group $L/K$ is free abelian and let $\chi $ be a full system of subgroups of $L$ over $K$. Let $R$ be a finitely 
generated commutative domain and $P$ be a $G$-invariant faithful prime ideal of $RD$ such that $char\, RD/P \notin Sp(N)$. Let $W$ be an $RN$-module which is annihilated by $P$ and  $RK/PRK$-torsion-free. Then there are a cyclic $RN$-submodule $0 \ne V \leqslant W$, a $G$-grand ideal $I$ of $RK$ and a central $G$-invariant subgroup $A$ of finite index in $N/{I^\dag }$ such that 
 $P\leq I$, $k=R/(R\bigcap I)$  and for any element $0 \ne b \in V$ we have:     
 \begin{description}
\item[(i)]  $X \in \chi (bRL)$ if and only if  the quotient module $(bRL)_{I}$ is not $k{A_X}$-torsion, where ${A_X} = A \cap X/{I^\dag }$;    
\item[(ii)]  for any dense subgroup $M \leqslant N$ such that $K \leqslant M$ and the quotient group $M/K$ is finitely generated, the finite set ${M_{kA}}((V)_I) = {[{\mu _{k{A_M}}}((bRM)_I)]_{kA}}$ of equivalent classes of prime ideals of $kA$ depends only on the ideal $I$ and the subgroup $A$;
 \item[(iii)] ${M_{kA}}((Vg)_I) = { ( M_{kA}}{((V)_I))^g} = \{ {[P]^g} = [{P^g}]|[P] \in {M_{kA}}((V)_I)\}$ for any  $g \in G$. 
 \item[(iv)] if $Vg \cong V$ for any  $g \in G$, then $ ( M_{kA}((V)_I))^G = M_{kA}{((V)_I)}$, i.e. $G$ acts on $M_{kA}{((V)_I)}$ by conjugations. 
\end{description}
\end{theorem}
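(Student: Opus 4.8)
The plan is to assemble parts (i) and (ii) from Propositions~\ref{P2.2}, \ref{P2.3} and Corollary~\ref{C3.1}, fixing one triple $(V,I,A)$, and then to obtain the $G$-equivariance (iii)--(iv) from the observation that every datum entering the construction is $G$-invariant. By the remark preceding Proposition~\ref{P2.2} (via \cite[Lemma~1(ii)]{Tush2024}), $D=\Delta_G(N)$ is central, isolated and $G$-invariant in $N$, so the hypotheses on $D$ in Propositions~\ref{P2.2}, \ref{P2.3} and Corollary~\ref{C3.1} hold. Applying Proposition~\ref{P2.3} to $(N,D,K,L,R,P,W)$ yields $0\ne a_{0}\in W$, a finitely generated dense subgroup $H\le K$, and a right ideal $J$ of $RH$ such that, for any $N$-grand ideal $I$ of $RK$ with $P\le I$ for which $I\cap RH$ culls $J$ in $RH$, part (i) holds for all $0\ne b\in a_{0}RN$. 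Feeding this $H$ and $J$ into Proposition~\ref{P2.2}(i) produces a $G$-grand ideal $I$ of $RK$ with $P\le I$ such that $I\cap RH$ culls $J$ in $RH$; a $G$-grand ideal is $G$-invariant by definition and, since $N\le G$, also $N$-grand, so Proposition~\ref{P2.3} applies to this $I$, and the culling gives $W\ne WI$. As $N/I^{\dag }$ carries a $G$-action ($N$ and $I^{\dag }$ being $G$-invariant), \cite[Lemma~2.2(i)]{Tush2022-1} supplies a characteristic, hence $G$-invariant, central torsion-free subgroup $A$ of finite index in $N/I^{\dag }$; set $k=R/(R\cap I)$. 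Finally, Corollary~\ref{C3.1} together with Proposition~\ref{P3.1}, applied to $a_{0}RN$, this $I$ and this $A$, gives a cyclic $RN$-submodule $0\ne V=aRN\le a_{0}RN$ for which $M_{kA}((V)_{I})=[\mu_{kA_{M}}((bRM)_{I})]_{kA}$ is a well-defined finite set independent of $0\ne b\in V$ and of the dense subgroup $M$ with $K\le M\le N$ and $M/K$ finitely generated. Then (i) is immediate since every $0\ne b\in V$ lies in $a_{0}RN$, and (ii) is exactly the content of Corollary~\ref{C3.1} and Proposition~\ref{P3.1} for $(V,I,A)$.

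For (iii), fix $g\in G$ and let $\sigma_{g}$ be conjugation by $g$; it fixes $P$ and $I$ (both $G$-invariant), preserves $K$ and $A$, and hence induces a ring automorphism of $kA$ compatible with the action $[Q]^{g}=[Q^{g}]$ on equivalence classes. To evaluate $M_{kA}((Vg)_{I})$ I would use the choices $b'=bg\in Vg$ (for $0\ne b\in V$), $M'=M^{g}$ --- again dense with $K\le M'\le N$ and $M'/K$ finitely generated because $K$ is $G$-invariant --- and $A_{M'}=(A_{M})^{g}$, which meets the conditions of \cite[Lemma~4.4(i)]{Tush2022-1} for $(b'RM')_{I}$ since those conditions are the $\sigma_{g}$-images of the ones for $(bRM)_{I}$. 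Because $\sigma_{g}(I)=I$ there is a $\sigma_{g}$-semilinear isomorphism $(b'RM')_{I}\cong(bRM)_{I}$, whence $\operatorname{Ann}_{k(A_{M})^{g}}((b'RM')_{I})=(\operatorname{Ann}_{kA_{M}}((bRM)_{I}))^{g}$ and therefore $\mu_{k(A_{M})^{g}}((b'RM')_{I})=(\mu_{kA_{M}}((bRM)_{I}))^{g}$; applying $[\,\cdot\,]_{kA}$ and $[Q]_{kA}^{g}=[Q^{g}]_{kA}$ gives $[\mu_{kA_{M'}}((b'RM')_{I})]_{kA}=(M_{kA}((V)_{I}))^{g}$. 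Since the right-hand side is independent of $0\ne b\in V$ and of $M$ by (ii), the left-hand side is independent of $b'$ and $M'$, so $M_{kA}((Vg)_{I})$ is well defined and equals $(M_{kA}((V)_{I}))^{g}$, proving (iii). For (iv), if $Vg\cong V$ as $RN$-modules, restricting such an isomorphism to the cyclic $RL$-submodule generated by a generator of $V$ gives $(aRL)_{I}\cong(a'RL)_{I}$ as $kA$-modules, so $\mu_{kA_{L}}((aRL)_{I})$ is unchanged and $M_{kA}((Vg)_{I})=M_{kA}((V)_{I})$; together with (iii) this forces $(M_{kA}((V)_{I}))^{g}=M_{kA}((V)_{I})$ for every $g\in G$, i.e. $G$ acts on the finite set $M_{kA}((V)_{I})$ by conjugation.

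The routine part of the argument is the bookkeeping that lets a single triple $(V,I,A)$ serve all of (i)--(iv): one must thread Propositions~\ref{P2.3} and \ref{P2.2}(i) and Corollary~\ref{C3.1} in that order, keeping the culling hypotheses and $W\ne WI$ available. The main obstacle is the equivariance in (iii): with the correct twisting conventions one has to check that $V\mapsto Vg$, the functor $(-)_{I}$, the descent to a finitely generated dense subgroup $M$ and its subgroup $A_{M}$, the passage to minimal primes, and the passage to equivalence classes all commute with $\sigma_{g}$, and at the same time that the independence statement (ii) stays in force so that $M_{kA}((Vg)_{I})$ is even well defined. Each compatibility is a consequence of the $G$-invariance of $P$, $I$, $A$, $K$ and $N$, but combining them cleanly is where the care is needed.
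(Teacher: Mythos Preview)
Your proposal is correct and follows essentially the same route as the paper: apply Proposition~\ref{P2.3} to produce the culling data, feed it into Proposition~\ref{P2.2}(i) to obtain the $G$-grand ideal $I$, choose $A$ characteristic (hence $G$-invariant) via \cite[Lemma~2.2(i)]{Tush2022-1}, then pass to the submodule $V$ supplied by Corollary~\ref{C3.1}; parts (iii)--(iv) are then obtained exactly as you do, by tracking the construction through conjugation by $g$ using the $G$-invariance of $I$ and $A$. Your treatment of (iii)--(iv) is somewhat more explicit than the paper's (which compresses (iii) into a short chain of equalities via (\ref{3.5}) and dispatches (iv) in one line as an immediate consequence of (iii)), but the substance is identical.
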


	\begin{proof}  (i) According to  Proposition \ref{P2.3}, there exist a cyclic $RN$-submodule $0 \ne U \leqslant W$, a finitely generated dense subgroup $H$ of $L$ and a right ideal $J$ of $R(H \cap K)$  such that if $I$ is an $N$-grand ideal of $RK$ such that $I \cap R(K \cap H)$ culls $J$ in $R(H \cap K)$ then  for any $0 \ne b \in U$ we have $X \in \chi (bRL)$ if and only if  the quotient module $(bRL)_{I}$ is not $k{A_X}$-torsion, where ${A_X} = A \cap X/{I^\dag }$ and $A$  is central subgroup of finite index in $N/{I^\dag }$. By Proposition \ref{P2.2}, there is a $G$-grand ideal $I$ of $RK$ such that $P\leq I$ and $I \cap R(K \cap H)$ culls $J$ in $R(H \cap K)$. As $N \leqslant G$, we can conclude that the ideal $I$ is an $N$-grand and hence the assertion (i) holds for the ideal $I$ and the submodule $U$. Since $N$, ${I^\dag }$ and $K$ are normal subgroups of $G$ and, by  \cite[Lemma 2.2(i)]{Tush2022-1}, any   abelian-by-finite nilpotent group of finite rank has a characteristic central subgroup of finite index, we can chose the subgroup $A$ such that $A$ is $G$-invariant. Then replacing $W$ by $U$ we can assume that (i) holds for any $0 \ne b \in W$. 
\par  
(ii) According to Corollary \ref{C3.1}, we can choose a cyclic submodule $0 \ne V \leqslant W$ such that for any $0 \ne b \in V$ and any dense subgroup $M \leqslant N$ with $K \leqslant M$ and finitely generated quotient group $M/K$ the finite set ${ M_{kA}}((V)_I) = {[{\mu _{k{A_M}}}((bRM)_I)]_{kA}}$ depends only on the ideal $I$ and the subgroup $A$. 
\par  
(iii)  It  follows from (ii) that  
$ M_{kA}((Vg)_I) =[{\mu _{k{A^g}_M}}((bRM)_{I} g)]_{kA}.$  
because the subgroup $A$ and the ideal $I$ are $G$-invariant.  
Then by (\ref{3.5}), $$M_{kA}((Vg)_I)=[\mu _{kA^g_M}((bRM)_{I}g)]_{kA} = {[\mu _{k{A^g}_M}(Ann_{{kA^g}_M}(bRM)_{I} g)]_{kA}}$$     
and hence $M_{kA}((Vg)_I)=[\mu_{kA^g_M}(Ann_{kA^g_M}(bRM)_I g)]_{kA}=$ ${[{\mu_{kA_M}}({(Ann_{k{A_M}}(bRM)_I)^g})]_{kA}}$. 
 \par 
So, we can conclude that $$M_{kA}((Vg)_I)=[{({\mu _{k{A_M}}}{(Ann_{kA_M}(bRM)_I)^g}]_{kA}}=\{ {[{P^g}]_{kA}}|P \in ({\mu _{k{A_M}}}(An{n_{k{A_M}}}(bRM)_I)\} =$$ $$ \{ {[P]^g}_{kA}|P \in ({\mu _{k{A_M}}}(Ann_{k{A_M}}(bRM)_I)\}={\{ [P]^g = [P^g ]|[P] \in M_{kA}((V)_I)\}= M_{kA}}{((V)_I)^g}$$ 
\par 
(iv) The assertion easily follows from (iii). 
\end{proof}


 \end{document}